\date{}
    \newcommand{\cb}{{\mathcal B}}
\numberwithin{equation}{section}
\providecommand\mathbb{\bf}
\providecommand\text[1]{\textrm{#1}}
\newtheorem{thm}{Theorem}[section]
\newtheorem{lem}[thm]{Lemma}
\newtheorem{prop}[thm]{Proposition}
\newtheorem{rem}[thm]{Remark}  %remarca numerotata
\newtheorem{exam}[thm]{Example} %remarca nenumerotata
\numberwithin{equation}{section}
\begin{document}

\begin{center}
{\large Nonlinear Dirichlet problem of non-local branching processes}\\[3mm]
Lucian Beznea\footnote{Simion Stoilow Institute of Mathematics of the Romanian Academy, P.O. Box 1-764, 014700
	Bucharest, Romania, and University POLITEHNICA Bucharest, CAMPUS Institute, Bucharest, Romania. 
 {E-mail}:
{lucian.beznea@imar.ro}},
{Oana Lupa\c{s}cu-Stamate}\footnote{Institute of Mathematical Statistics and Applied Mathematics of the Romanian Academy,  
Calea 13 Septembrie 13, Bucharest, Romania.
 E-mail: oana.lupascu@yahoo.com}, 
and
{Alexandra Teodor}\footnote{University POLITEHNICA Bucharest, Bucharest, Romania, and Simion Stoilow Institute of Mathematics of the Romanian Academy.
{E-mail}: alexandravictoriateodor@gmail.com}
\end{center}

\vspace{5mm}

\begin{abstract}
	We present a method of solving a nonlinear Dirichlet problem with discontinuous boundary data and  we give a probabilistic representation of the solution using the non-local branching process associated with the nonlinear term of the operator. Instead of the pointwise convergence of the solution to the given boundary data we use the controlled convergence which allows to have discontinuities at the boundary.  

\end{abstract}

\vspace{4mm}

\noindent
{\it Mathematics Subject Classification (2020)}:  
35J60,   % Nonlinear elliptic equations
60J80,   % Branching processes (Galton-Watson, birth-and-death, etc.)
60J45,  % Probabilistic potential theory
47D07,  % Markov semigroups and applications to diffusion processes
60J35. % Transition functions, generators and resolvents

% 60J35, % Transition functions, generators and resolvents
% 37C10 % Dynamics induced by flows and semiflows
% 60J45, % Probabilistic potential theory
% 47D07 % Markov semigroups and applications to diffusion processes
% 60J40  % Right processes
% 60J68, % Superprocesses

% 35J60, % Nonlinear elliptic equations
% 60J80, % Branching processes (Galton-Watson, birth-and-death, etc.)
% 37H05 General theory of random and stochastic dynamical systems
% 37L55 Infinite-dimensional random dynamical systems; stochastic equations

\vspace{1mm}

\noindent
{\bf Key words and phrases.}
Nonlinear Dirichlet problem, 
discontinuous boundary data, 
non-local branching process, 
controlled convergence, 
continuous flow, 
strong Feller transition function, weak generator.\\

\section{Introduction}

Consider the following nonlinear Dirichlet problem%
\begin{equation}  \label{problem}
\left\{ 
\begin{array}{c}
( \Delta -c) u+c\underset{k\geqslant 1}{\sum }b_{k}B_{k}u^{(
k) }=0\text{ in } D \\[4mm]
\underset{D \ni x \rightarrow y}{\lim} u(x)=\varphi(y) \text{ for all }y \in \partial D,%
\end{array}%
\right.  
\end{equation}%
where $D$ be a  bounded domain of  $\mathbb{R}^d$, $d\geqslant 1$, $E:=\overline{D}$,
$c$ is a bounded positive, real-valued Borel measurable function on $D$, extended with zero on $\mathbb{R}^d\setminus D$,
$\left( b_{k}\right) _{k\geqslant 1}$ is  a sequence of positive Borel measurable
functions on $E$ such that $\sum_{k\geqslant 1}b_k\leqslant 1$, 
for each $k \geqslant 1$  
$B_{k}$ is a Markovian kernel from $E%
^{\left( k\right) }$, 
 the $k$-th symmetric power of $E$, 
to $E$,  and 
$\varphi : \partial D \longrightarrow \mathbb{R}_{+}$ is  a bounded Borel measurable function.
The Laplace operator $\Delta$  is seen as the weak generator 
(in the sense of E.B. Dynkin)
of the Brownian motion on $\mathbb{R}^ d$ stopped at the entry time of the boundary of $D$,
$u$ is a positive real-valued Borel measurable function on $E$ 
which belongs to the domain of $\Delta$, 
such that $u\leqslant 1$, and for every $k\in\mathbb{N}$, $k\geqslant 1$, we denoted by $u^{\left( k\right) }
:E^{\left( k\right) }\longrightarrow \mathbb{R}$ the function defined as $u^{( k) }( 
\mathbf{x}) :=u( x_{1}) \cdot ...\cdot u(
x_{k}) $ for all $\mathbf{x}=( x_{1},...,x_{k}) \in 
E^{(k) }$.

Recall that a particular (actually, the classical) case of the problem (\ref{problem}) is:
\begin{equation} \label{prob1.2}
\left\{ 
\begin{array}{c}
(\Delta -c) u+c\underset{k\geqslant 1}{\sum }b_{k} u^{
k }=0\text{ in } D \\[4mm]
\underset{D \ni x \rightarrow y}{\lim} u(x)=\varphi(y) \text{ for all }y \in \partial D,%
\end{array}%
\right.  
\end{equation}%
obtained by taking the kernels $B_k$, $k\geqslant 1$, defined as 
$B_k g(x):=g( x,...,x) $ for  $x\in E$ and $g$ a Borel function on $E^{\left( k\right) }$; 
 see assertion $(iv)$ of Remark \ref{rem2.4} below.

The existence of a solution to the problem (\ref{problem}) was investigated  in 
\cite{LuSt17}; see also \cite{Hsu}. 
Notice that the boundary condition in (\ref{problem}) 
implies that $\varphi$ is a continuous function on $\partial D$, 
provided that the solution $u$ to  (\ref{problem}) is a continuous function on $D$.

Instead of $\mathbb{R}^d$ it is possible to consider
a  Lusin topological space $F$ (i.e., $F$ is homeomorphic to a Borel subset of a compact 
metrizable space), and $D$ a bounded domain of $F$ such that $E:=\overline{D}$ is a compact set.
The Laplace operator $\Delta$ is replaced in this general frame by the weak generator $L$ of a diffusion $X$ on $E$.
The corresponding version of the problem (\ref{problem}) (with $L$ instead of $\Delta$)  was investigated in 
\cite{BeOp11} for the case when $\varphi \leqslant 1$  
is a positive and continuous real-valued function on $\partial D$, 
generalizing a classical result of Nagasawa from \cite{Naga76}. 
In  \cite{BeOp14} it is treated the case when  $\varphi$ is a continuous bounded positive function.

The aim of this paper is twofolds.
First, we intend to solve the problem (\ref{problem}) for functions $\varphi$ which are discontinuous, replacing the pointwise convergence to the boundary data with an adequate convergence called "controlled convergence". 
The control is expressed with a $L$-superharmonic function on $D$,  
$k:D \longrightarrow \overline{\mathbb{R}}_+$, 
and we write $u\overset{k}{\longrightarrow } \varphi$ instead of the pointwise convergence of $u$ 
to the boundary data $\varphi$. 
So, for bounded and positive boundary data $\varphi$ on $\partial D$ we shall consider the boundary value problem 
\begin{equation} \label{problem2} 
	\left\{ 
	\begin{array}{l}
		( L -c) u+c\underset{k\geqslant 1}{\sum }b_{k}B_{k}u^{(
			k) }=0\text{ in }D \\ 
		u\overset{k}{\longrightarrow } \varphi %
	\end{array}%
	\right.    
\end{equation}%
for two cases:
\begin{enumerate}[]
	\item[(1.2i)] \label{L=BM} $L$ is the Laplace operator, more precisely, $L$ is the weak generator of the $d$-dimensional 
 Brownian motion,
 which is an extension of the Laplace operator. 
	\item[(1.2ii)] \label{L=Phi} $L$ is a gradient type operator, 
	the weak generator of a continuous flow $\phi=(\phi_t)_{t\geqslant 0}$ on $F$, leaving $D$ in finite time. 
 In this case, as in \cite{BeCoRo11}, 
 we have to consider an exceptional set for the controlled convergence to the boundary data in (\ref{problem2}). 
 Examples of such gradient type operators and flows are given 
 in Example \ref{exam3.3} below, in both finite and infinite dimensional frames.
\end{enumerate}

\vspace{-3mm}

A positive real-valued Borel measurable function $u$ on $E$ satisfying (\ref{problem2}) 
is called \textit{generalized solution} to the problem.

The second aim is to show that the problem  (\ref{problem2}) has
a {generalized solution} which admits a probabilistic representation.
As in \cite{BeLu16} and \cite{LuSt17}, a key tool of our approach is   
a non-local branching Markov process 
$\widehat{X}=(\widehat{X}_t, \widehat{\mathbb{P}}^{\mu},\mu\in \widehat{E})$ 
with state space the set $\widehat{E}$ of all finite configurations of $E$
(i.e. the set of all finite sums of Dirac measures concentrated at points of $E$ to which 
we add the zero measure $\mathbf{0}$ on $E$, 
endowed with the weak topology on the finite measures on $E$ 
and the corresponding Borel $\sigma $-algebra $\mathcal{B}(\widehat{E})$, 
that is, the set $\widehat{E}$ is identified with  
$\widehat{E}=\underset{k\geqslant 0}{\bigcup }E^{\left( k\right) }$,  where $E^{(0)}:=\{\mathbf{0} \}$);
$\widehat{X}$ has the spatial motion $X$ 
and the branching mechanism induced by the sequence of Markovian kernels 
$(B_k)_{k\geqslant 1}$ 
(for details see e.g. \cite{BeLu16} and \cite{BeLuVr20}). 

It is important to notice that in contrast with the linear Dirichlet problem on $D$ (the case $c=0$ in $(\ref{problem})$ and (\ref{problem2})),
for which the probabilistic representation of the solution is given by using a path continuous Markov process, describing the stochastic evolution in $D$ of a single particle (see also assertion (iii) of Remark \ref{rem2.4} below), in order to give a probabilistic representation for the nonlinear Dirichlet problem it is necessary a branching process, which describes the time evolution in $D$ of a system of particles.
We mention here the essential contribution of E.B. Dynkin in using the measure-valued superprocesses as
instruments for solving semilinear  equations,  
the typical one being  $\Delta u= u^\alpha$, with $1<\alpha\leqslant 2$; 
see the monographs \cite{Dynkin02}, \cite{Dynkin04},  and the references therein, see also \cite{Dawson}.

It turns out that in order to follow the above mentioned program initiated by E.B. Dynkin in  the early 1990s,  
for solving the equation  $( L -c) u+c {\sum }_{k\geqslant 1} b_{k}B_{k}u^{(k) }=0$ we need the  non-local branching process
$\widehat{X}$ instead of a superprocess.
Employing techniques from \cite{BeOp11}, \cite{BeOp14}, and \cite{BeLu16}  
(see also \cite{BeLuVr20}), we show that in the case of (1.2i), for some $r>0$ such that  $0\leqslant \varphi  < r$, 
there exists a solution $u$ to the nonlinear Dirichlet problem (\ref{problem2}) 
such that it admits a probabilistic representation, by using the measure-valued branching process $\widehat{X}$,	
$$
u(x)=\underset{t\rightarrow \infty }{\lim }r \widehat{\mathbb{E}}^{\delta _{x}}\{%
\widehat{\left( \frac{\varphi }{r}\right)  }(\widehat{X}_{t})\} \text{ for all }x\in E,   
$$
where $u$ converges to $\varphi$ controlled by a (real-valued) harmonic function $k$. 

In the case of (1.2ii), following \cite{BeVr22}, the measure-valued process $\widehat{X}$ admits a representation through 
the flow of measures induced by the spatial motion $\Phi$ 
(= the flow on $E$ obtained from $\phi$ by stopping it at the boundary of $D$) 
and 
a second branching Markov process 
$\widehat{X^0}= (\widehat{X^0_t}, \widehat{\mathbb{P}^0}^{\, \mu}, \mu \in \widehat{E})$ 
which has the same branching mechanism as $\widehat{X}$ but has 
no spatial motion; $\widehat{X^0}$  is called pure branching process. 
The probabilistic representation of the solution $u$ becomes 
$$
u(x)=\lim_{t\rightarrow \infty }r\widehat{%
	\mathbb{E}^{0}}^{\delta _{x}}
 \{\widehat{\left( \frac{\varphi }{r}\right) }%
(\Phi _{t}(\widehat{X^0_t} ))\}\text{ for all }x\in E,
$$ 
and $u$ converges to $\varphi$ controlled by a positive Borel measurable function $k$, outside an exceptional set.

The paper has the following structure. 
In Section \ref{sect2} we present preliminary results on the Markov processes: 
the entry and hitting times, regular boundary points, the transition function. 
In particular, we show  that the strong Feller property of a transition function is preserved by killing.
Further, it is given a version of the stochastic solution of the Dirichlet problem, 
then it is introduced the controlled convergence, and finally, 
{several results on the non-local branching processes are exposed}.
In particular, it is pointed out the
probabilistic meaning of the kernels
$B_k$.
Section \ref{sect3} is reserved to the main results on the nonlinear Dirichlet problem 
and the probabilistic representation of the solution.
In Subsection \ref{subsect3.1} we treat the case of the Laplace operator 
(Theorem \ref{thm1.1}), while in Subsection \ref{subsect3.2} 
we investigate the case of the gradient type operator (Theorem \ref{thm3.3}).
Assertion $(iv)$ from Remark \ref{rem2.4} is a result 
on the smallness of the set  on the boundary where the solution does not converge pointwise to the given boundary data.
The last section is an Appendix, collecting  some facts
on the fine topology, the weak generator, and the non-local branching processes;
we also put here the proofs of several results from Section \ref{sect2}.

\section{Preliminary results, controlled convergence,
and  \linebreak 
branching processes} \label{sect2}

Let $\mathcal{B}(F)$
be the Borel $\sigma$-algebra of the Lusin topological space $F$.
For  $A\in \mathcal{B}(F)$, we denote by $\mathcal{B}(A)$ the Borel $\sigma$--algebra of $A$, $\mathcal{B}(A)=
\mathcal{B}(F)|_A$.
Let further $\mathcal{B}_+(A)$ denote the convex cone of 
all numerical, positive $\mathcal{B}(A)$-measurable functions on $A$ and $b\mathcal{B}_+(A) 
$ be the set of bounded functions from  $\mathcal{B}_+(A)$.

Let $Y=(Y_t, \mathbb{P}^{x}, x\in F )$ be a diffusion on $F$, that is, 
a path continuous right Markov process with state space $F$.

Let 
$\tau$ be the {\it first entry time}  of $\partial D$,
$\tau:=\inf \{t\geqslant 0: Y_{t} \in \partial D\}$,  and 
$\tau_o$ be the {\it first hitting time}  of $\partial D$, $\tau_o:= \inf\{ t>0 : Y_t\in \partial D\}$. 
Recall that if $x\in D$ then
$\tau=\tau_o$ $\mathbb{P}^{x}$-a.s. 

Define the kernel $P_{\partial D}$ on $F$ as follows:
$$ 
P_{\partial D}f (x) = \mathbb{E}^x \{ f(Y_{\tau_o}); \tau_o <\infty\},\  f\in \mathcal{B}_+(F),\  x\in F.
$$
It is called the {\it hitting kernel of}
$\partial D$; we have denoted by $\mathbb{E}^{x}$ the expectation under $\mathbb{P}^{x}$. 

{We consider the process $Y$ stopped at the boundary of $D$, that is, 
the process $X=(X_t, \mathbb{P}^x , x\in E)$ with state space $E$, defined as $X_{t}:=Y_{t\wedge \tau }$.
Recall that we denoted by $E$  the closure of $D$.
Suppose  
that $\mathbb{P}^{x}( \tau <\infty
) =1$ for all $x\in D$. Let $L$ be the weak generator associated with the process $X$; see the Appendix below.}

Let $(T_t)_{t\geqslant 0}$ be the transition function of $X$, that is, 
for every $f\in b\mathcal{B}_+(E)$, $T_tf(x):=\mathbb{E}^xf(X_t)$, $t\geqslant 0$.  
 Let further $( T^c_{t}) _{t\geqslant 0}$ be the
transition function of the process obtained from $X$ by killing with the
multiplicative functional induced by $c,$ expressed as  the {\it Feynman-Kac
semigroup}, 
\begin{equation*}
	T^c_{t}f\left( x\right) =\mathbb{E}^{x}\{ e^{-\overset{t}{\underset{0}{\int }}%
		c( X_{s}) ds}f( X_{t})\}  ,\text{ }f\in b\mathcal{B}_+(E), x\in E.
\end{equation*}%
Its corresponding  weak generator is $L -c$, 
where $L$ is the weak generator of $(T_t)_{t\geqslant 0}$,  and we have $\mathcal{D}(L-c)=\mathcal{D}(L)$.

Define  the kernel $P_{\tau}^{c}$ on $F$ as 
$$
P_{\tau }^{c}f( x) := 
\mathbb E^{x}\{e^{-\int_{0}^{\tau }
c(Y_{s}) ds}f\left( Y_{\tau }\right)
	\}, \ 
 f\in b\mathcal{B}_+(F), x\in F.
$$
Defining analogously the kernel $P_{\tau_o }^{c}$,  we have 
$P_{\tau }^{c}f( x)=P_{\tau_o }^{c}f( x)$ if $x\in D$.

We claim that 
\begin{equation} \label{eq2.1}
	\underset{t\rightarrow \infty }{\lim }T^c_{t}(f|_E)\left( x\right)
 =P_{\tau }^{c}f( x) \text{, }x\in E, f\in b\mathcal{B}_+(F).
\end{equation}
The proof of $(\ref{eq2.1})$ is given in the Appendix.

\vspace{2mm}

\noindent{\bf Strong Feller transition function}. 
A transition function $(T_t)_{t\geqslant 0}$ on a Lusin topological space $F$ is called 
{\it strong Feller} provided that $T_tf$ is a continuous function on $F$ for every $f\in b\mathcal{B}_+(F)$ and $t>0$. 

The next result shows that the strong Feller property of a transition function is preserved by killing. 
We present its proof in the Appendix.  
A related result, under restrictive conditions, was proved in \cite{Chung86}, Theorem 2.

\begin{lem} \label{lemma2.1}  % Lemma 2.1
Let $(S_t)_{t\geqslant 0}$ be the transition function of a right Markov process $Y$ on the Lusin topological space $F$, 
$c\in b\mathcal{B}_+(F)$, and consider 
$( S^c_{t}) _{t\geqslant 0}$, the
transition function of the process obtained from $Y$ by killing with the multiplicative functional induced by $c$. 
Then the following assertions hold.

$(i)$ If $(S_t)_{t\geqslant 0}$  is strong Feller
then so is $( S^c_{t}) _{t\geqslant 0}$.

$(ii)$ Assume in addition that $F$ is a locally compact space with a countable base. 
If  $(S_t)_{t\geqslant 0}$ acts on the space of continuous functions vanishing at infinity then 
$( S^c_{t}) _{t\geqslant 0}$ has the same property. 
\end{lem}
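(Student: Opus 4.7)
The plan is to deduce both assertions from a Dyson--Phillips series expansion of the Feynman--Kac semigroup. Applying the fundamental theorem of calculus to $s\mapsto -e^{-\int_s^t c(Y_u)\d u}$ yields $e^{-\int_0^t c(Y_s)\d s} = 1 - \int_0^t c(Y_s)\,e^{-\int_s^t c(Y_u)\d u}\d s$, and one application of the Markov property at time $s$ gives the integral equation
\[
S_t^c f \;=\; S_t f - \int_0^t S_s\bigl(c\cdot S_{t-s}^c f\bigr)\d s.
\]
Iterating this identity and using $\|c\|_\infty<\infty$ to send the remainder to zero produces the series $S_t^c f = \sum_{n\geqslant 0}(-1)^n V_n(t)f$, where $V_0(t)f := S_tf$ and $V_n(t)f(x) := \int_0^t S_s(c\cdot V_{n-1}(t-s)f)(x)\d s$ for $n\geqslant 1$. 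A simple induction gives the uniform bound $\|V_n(t)f\|_\infty \leqslant \|c\|_\infty^n t^n/n!\cdot\|f\|_\infty$, so the series converges absolutely and uniformly on $F$. It therefore suffices to show that every $V_n(t)f$ belongs to the target function class.

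For (i) I would proceed by induction on $n$. The case $n=0$ is the strong Feller hypothesis. For the inductive step, the function $c\cdot V_{n-1}(t-s)f$ lies in $b\mathcal{B}_+(F)$ (merely bounded and measurable, since $c$ need not be continuous), so the strong Feller property of $S_s$ ensures that for every fixed $s>0$ the map $x\mapsto S_s(c\cdot V_{n-1}(t-s)f)(x)$ is continuous on $F$. The integrand is uniformly dominated by the integrable function $s\mapsto \|c\|_\infty^n (t-s)^{n-1}/(n-1)!\cdot\|f\|_\infty$, and the single point $\{0\}$ is Lebesgue-negligible, so for any sequence $x_k\to x$ dominated convergence yields $V_n(t)f(x_k)\to V_n(t)f(x)$. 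Continuity passes to the uniform limit, which proves that $(S_t^c)$ is strong Feller.

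Assertion (ii) then follows almost immediately: (i) already delivers the continuity of $S_t^c f$, and the elementary pointwise bound
\[
|S_t^c f(x)| \;=\; \bigl|\mathbb E^x\{e^{-\int_0^t c(Y_s)\d s} f(Y_t)\}\bigr| \;\leqslant\; S_t|f|(x),
\]
valid since $c\geqslant 0$, shows that $S_t^c f$ is dominated by a function in $C_0(F)$ whenever $f\in C_0(F)$, and therefore vanishes at infinity.

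I expect the main obstacle to be the inductive continuity step in (i): the product $c\cdot V_{n-1}(t-s)f$ is only measurable because $c$ is merely measurable, which is precisely why the \emph{strong} Feller property (rather than plain Feller continuity) is indispensable. Once the Dyson--Phillips bound is in place, the interchange of the limit $x_k\to x$ with the $s$-integral rests only on dominated convergence and on the harmless fact that the strong Feller property is available for $s>0$ alone.
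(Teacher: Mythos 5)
Your proof is correct and rests on the same integral equation $S_t^c f = S_tf - \int_0^t S_s\bigl(c\,S_{t-s}^c f\bigr)\,ds$ that the paper invokes (via Proposition 3.3 of \cite{BeJEMS}), with the identical key mechanism: the strong Feller property of $S_s$ upgrades the merely bounded measurable function $c\cdot(\cdot)$ to a continuous one, and dominated convergence in $s$ then gives continuity of the time integral, while part (ii) is handled exactly as in the paper by $S^c_t f\leqslant S_t f$. The only difference is that your Dyson--Phillips iteration is an unnecessary detour: the paper applies the integral equation just once, since $S^c_{t-s}f$ being bounded and measurable is already all that the strong Feller hypothesis requires.
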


Recall  that a point $x\in \partial D$ is said to be {\it regular boundary point} of $D$ if  
$\mathbb{P}^{x}(\tau_o =0)=1 $; see e.g. \cite{Dynkin}, vol. II, page 32, or
\cite{ChungZhao95}, page 23.
The domain $D$ is called {\it regular} provided that every point  of $\partial D$ is a regular boundary point of $D$. 

\vspace{1mm}

Denote by $V$ the kernel on $F$  defined as
$$
Vf (x)= 
\mathbb{E}^{x} \!\!
\int_0^\infty 
e^{-\overset{t}{\underset{0}{\int }} c( Y_{s}) ds}
f( Y_{t}) dt, \ 
f\in b\mathcal{B}_+(F), x\in F,
$$
it is the {\it potential kernel} of the process on $F$ obtained from Y
by killing with the multiplicative functional induced by $c$. \\

\noindent
{\bf The stochastic solution to the Dirichlet problem.}
The following result is a version of 
Theorem 13.1 from \cite{Dynkin}, vol II, page 32. Its proof is based essentially on the above mentioned
result from \cite{Dynkin}, but uses also
Lemma \ref{lemma2.1}, and several arguments from the potential theory  for Markov processes.
We put it in the Appendix.

\begin{prop} \label{prop2.2}
Assume that $F$ is a locally compact space with countable base.
If $Y$ 
is a right Markov process with state space $F$, having continuous paths, 
such that its transition function $(S_t)_{t\geqslant 0}$ is strong Feller and acts on continuous functions vanishing at a infinity, and
$D$ is a regular domain with compact closure. 
Suppose in addition that the potential kernel $V$  is {\rm proper}, that is, 
there exists a function $h\in \mathcal{B}_+(F)$, $h>0$, such that $Vh$ is a real-valued function.
Then  
\begin{equation} \label{continuous}
	\underset{D\ni x\rightarrow y}{\lim }P_{\tau }^{c}f(x) =f(y) \text{ for every } 
 f\in b\mathcal{C}_+\left( \partial D\right)
 \text{ and } 
 y\in \partial D. 
\end{equation}
\end{prop}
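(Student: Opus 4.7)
The strategy is to reduce the continuity of $P_\tau^c f$ at a regular boundary point $y$ to two ingredients: vanishing of the entry time, $\tau\to 0$ in $\mathbb{P}^x$-probability as $x\to y$, and path continuity of $Y$. Since $F$ is Lusin and locally compact, it is metrizable, so the Tietze extension theorem lets us extend $f\in b\mathcal{C}_+(\partial D)$ to $\tilde f\in b\mathcal{C}_+(F)$. For $x\in D$ one then decomposes
\begin{equation*}
P_\tau^c f(x)-f(y)=\mathbb{E}^x\bigl\{e^{-\int_0^{\tau}c(Y_s)\,ds}[\tilde f(Y_{\tau})-\tilde f(y)]\bigr\}+\tilde f(y)\,\mathbb{E}^x\bigl\{e^{-\int_0^{\tau}c(Y_s)\,ds}-1\bigr\},
\end{equation*}
and aims to show that each summand tends to $0$ as $x\to y$.

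The heart of the argument is the continuity lemma: for $y\in\partial D$ regular and every $\varepsilon>0$,
\begin{equation*}
\lim_{x\to y,\ x\in F}\mathbb{P}^x(\tau_o>\varepsilon)=0.
\end{equation*}
By regularity, $\mathbb{P}^y(\tau_o=0)=1$. I would derive the lemma following the route of \cite{Dynkin}, Theorem 13.1, vol.~II. The strong Feller property of $(S_t)_{t\geqslant 0}$, together with Lemma \ref{lemma2.1} — which lifts strong Feller and the action on $\mathcal{C}_0(F)$ to the Feynman--Kac semigroup — and the properness of $V$ (producing a strictly positive continuous excessive function $Vh$ on $F$) allow one to identify $x\mapsto \mathbb{E}^x\int_0^{\tau_o}e^{-\int_0^s c(Y_u)\,du}h(Y_s)\,ds$ as the reduced potential of $h$ in $D$: it is excessive, strictly positive on $D$, and vanishes continuously at regular boundary points. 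The standard potential-theoretic machinery then yields continuity of the hitting distribution at $y$, hence $\tau_o\to 0$ in $\mathbb{P}^x$-probability as $x\to y$; since $\tau=\tau_o$ $\mathbb{P}^x$-a.s.\ for $x\in D$, this is exactly what we need.

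Given the lemma, path continuity of $Y$ yields $Y_\tau\to y$ in $\mathbb{P}^x$-probability as $x\to y$, and continuity of $\tilde f$ at $y$ then gives $\tilde f(Y_\tau)\to f(y)$ in probability; boundedness of $\tilde f$ and of the Feynman--Kac factor close the first summand via bounded convergence. For the second, the elementary estimate $|1-e^{-\int_0^{\tau}c(Y_s)\,ds}|\leqslant \min(\|c\|_\infty\,\tau,\,1)$ combined with $\tau\to 0$ in probability again provides bounded convergence to $0$.

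The principal obstacle is the continuity lemma itself: everything else reduces to routine bounded-convergence arguments once it is in hand. It is precisely in this step that the three hypotheses — strong Feller, action on $\mathcal{C}_0(F)$, and properness of $V$ — are genuinely needed, and it is the technical core requiring the potential-theoretic ingredients referenced by the authors.
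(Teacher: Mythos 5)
Your proposal is correct and follows essentially the same route as the paper: both rest on Lemma \ref{lemma2.1} (killing preserves the strong Feller property and the action on $\mathcal{C}_0(F)$), the properness of $V$, and the regular-point continuity argument of Theorem 13.1 in \cite{Dynkin}, vol.~II. The only organizational difference is that the paper invokes Hunt's fundamental theorem to identify $P_\tau^c$ on $D$ with the hitting kernel of the killed process and then cites Dynkin's theorem for that process wholesale, whereas you keep the original process, isolate the key lemma $\mathbb{P}^x(\tau_o>\varepsilon)\to 0$ as $x\to y$, and handle the Feynman--Kac factor by hand via bounded convergence --- an unpacking of the same argument rather than a different one.
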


\noindent
{\bf Controlled convergence}; cf. \cite{Cornea95} and \cite{Cornea98}.
Let $f:\partial D\rightarrow \overline{\mathbb{R}},$ $%
D_{o}\subset D$ and $h,k:D\longrightarrow \overline{\mathbb{R}},$ $%
k\geqslant 0,$ such that $h|_{D_{o}},$ $k|_{D_{o}}$ are real-valued. We say that $%
h$ converges to $f$ controlled by $k$ on $D_{o}$ (we write $h\overset{k}{\longrightarrow }f$ on $D_o$) if for every $A\subset
D_{o} $ and $y\in \partial D\cap \overline{A}$ the following conditions hold:

\begin{enumerate}
	\item[$( \ast ) $] If $\underset{A\ni x\rightarrow y}{\lim \sup }$
	$k( x) <\infty $ then $f( y) \in \mathbb{R}$ and $f(
	y) =\underset{A\ni x\rightarrow y}{\lim }h(x), $
	
	\item[$( \ast \ast ) $] If $\underset{A\ni x\rightarrow y}{\lim }$
	$k( x) =\infty $ then $\underset{A\ni x\rightarrow y}{\lim }\frac{%
		h( x) }{1+k( x) }=0.$
\end{enumerate}
 If the set $D_o$ is not specified, then $D_o=D$ and we write $%
h$ converges to $f$ controlled by $k$. This case was considered in \cite{Cornea98} and \cite{BeTeodor}.

If $ g: \partial D \longrightarrow \overline{\mathbb{R}}$ is a  bounded below Borel measurable function and $x\in D$ define
$$
H_D g(x) := \mathbb{E}^{x} g (Y_{\tau}).
$$

We state now a result on the controlled convergence for the stochastic solution of the linear Dirichlet problem
with discontinuous boundary data.
It turns out that we can exploit it in solving the nonlinear boundary value problem 
(\ref{problem2}).
In the linear case this reasoning was first used in the proof of 
Theorem 5.3 from  \cite{BeCoRo11}; see also
Theorem 4.8 from \cite{Be11},  and Theorem 2.5 from \cite{BeTeodor}.
For the reader's convenience we present a sketch of the  proof in the Appendix.

\begin{lem} \label{Lema} 
Let $\lambda$ be a finite measure on $D$ and $\sigma:=\lambda \circ P^c_\tau$. 
Assume that (\ref{continuous}) holds, and let $\varphi\in L_+^1(\partial D, \sigma)$, 
then there exists a Borel measurable function 
$g:\partial D \longrightarrow \overline{\mathbb{R}}_+$ 
such that $P_{\tau }^{c}\varphi \overset{k}{\longrightarrow }\varphi$ on $D_o=[k<\infty]$, 
where $k:= P^c_\tau g\in L^1(D,\lambda)$.
    \end{lem}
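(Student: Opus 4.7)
The plan is to use the density of bounded continuous positive functions in $L^1_+(\partial D,\sigma)$ together with the boundary continuity property (\ref{continuous}) applied to the approximants. First I choose $\psi_n\in b\mathcal{C}_+(\partial D)$ with $\|\varphi-\psi_n\|_{L^1(\sigma)}\leqslant 4^{-n}$, pass to a subsequence so that moreover $\psi_n\to\varphi$ $\sigma$-a.e., and set $g:=\sum_{n\geqslant 1}2^{n}|\varphi-\psi_n|$. By monotone convergence $\int_{\partial D} g\,d\sigma\leqslant\sum_n 2^{-n}=1$, so $g\in L^1_+(\partial D,\sigma)$. Since $\sigma=\lambda\circ P_{\tau}^{c}$, that is, $\int_D P_{\tau}^{c} h\,d\lambda=\int_{\partial D}h\,d\sigma$ for every $h\in\mathcal{B}_+(\partial D)$, the function $k:=P_{\tau}^{c} g$ belongs to $L^1(D,\lambda)$.

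The key observation is that, term by term in the series defining $k$, one has $P_{\tau}^{c}|\varphi-\psi_n|(x)\leqslant 2^{-n}k(x)$ for every $x\in D_o=[k<\infty]$ and every $n\geqslant 1$; in particular $P_{\tau}^{c}\varphi$ is finite on $D_o$ and $|P_{\tau}^{c}\varphi(x)-P_{\tau}^{c}\psi_n(x)|\leqslant 2^{-n}k(x)$ there. To verify $(\ast)$, fix $y\in\partial D$, $A\subset D_o$ with $y\in\overline{A}$ and $\limsup_{A\ni x\to y}k(x)=M<\infty$; then eventually $|P_{\tau}^{c}\varphi(x)-P_{\tau}^{c}\psi_n(x)|\leqslant 2^{-n}(M+1)$, while continuity of $\psi_n$ together with (\ref{continuous}) gives $P_{\tau}^{c}\psi_n(x)\to\psi_n(y)$, whence $\limsup_{A\ni x\to y}|P_{\tau}^{c}\varphi(x)-\psi_n(y)|\leqslant 2^{-n}(M+1)$. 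Comparing this bound for two indices $n,m$ shows $(\psi_n(y))_n$ is Cauchy, so it has a limit $L(y)$; then $\limsup_{A\ni x\to y}|P_{\tau}^{c}\varphi(x)-L(y)|\leqslant 2^{-n}(M+1)+|\psi_n(y)-L(y)|$, and letting $n\to\infty$ yields $\lim_{A\ni x\to y}P_{\tau}^{c}\varphi(x)=L(y)$.

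For $(\ast\ast)$, if $\lim_{A\ni x\to y}k(x)=\infty$, dividing the key inequality by $1+k(x)$ gives $\frac{P_{\tau}^{c}\varphi(x)}{1+k(x)}\leqslant\frac{P_{\tau}^{c}\psi_n(x)}{1+k(x)}+2^{-n}\frac{k(x)}{1+k(x)}\leqslant\frac{\|\psi_n\|_\infty}{1+k(x)}+2^{-n}$; sending $x\to y$ and then $n\to\infty$ yields $\lim_{A\ni x\to y}P_{\tau}^{c}\varphi(x)/(1+k(x))=0$, which is exactly $(\ast\ast)$.

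The main obstacle is identifying the limit $L(y)=\lim_n\psi_n(y)$ appearing in $(\ast)$ with the value $\varphi(y)$: the $L^1$-convergence only guarantees the pointwise identity $\lim_n\psi_n(y)=\varphi(y)$ off a $\sigma$-null set $N\subset\partial D$. Since $\varphi\in L^1_+(\partial D,\sigma)$ is determined only up to a $\sigma$-nullset, I replace it by the canonical Borel representative $\varphi^{*}(y):=\lim_n\psi_n(y)$ at the points where this limit exists (and, say, $\varphi^{*}=\varphi$ on $N$); this modification does not alter $g$ or $k$ modulo $\sigma$-nullsets and produces the identification required in $(\ast)$, completing the verification of $P_{\tau}^{c}\varphi\overset{k}{\longrightarrow}\varphi$ on $D_o$. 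This representative-selection step, standard in the potential-theoretic framework of \cite{Cornea98} and \cite{BeCoRo11}, is the delicate technical point of the argument.
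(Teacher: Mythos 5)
Your route is genuinely different from the paper's: the paper sketches a monotone--class argument (the set $\mathcal{M}$ of resolutive data contains $\mathcal{C}_+(\partial D)$ by (\ref{continuous}) and is shown, following \cite{BeCoRo11} and \cite{Be11}, to be stable under increasing limits in $L^1_+(\partial D,\sigma)$), whereas you build the control directly from a geometric-rate approximation, $g=\sum_n 2^n|\varphi-\psi_n|$. The quantitative part of your argument is correct: $k=P^c_\tau g\in L^1(D,\lambda)$, the key bound $|P^c_\tau\varphi-P^c_\tau\psi_n|\leqslant 2^{-n}k$ on $[k<\infty]$, and the verifications of $(\ast)$ (with limit $L(y)=\lim_n\psi_n(y)$) and of $(\ast\ast)$ are all sound.

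The gap is the final ``representative selection'' step, and it is not a harmless technicality: it changes the statement being proved. The lemma (and its use in Theorem \ref{thm1.1}, in particular Remark \ref{rem2.4}\,(iv)) requires controlled convergence to the \emph{given} Borel function $\varphi$, whose values at individual boundary points matter in condition $(\ast)$; replacing $\varphi$ by $\varphi^*=\lim_n\psi_n$ alters it on a $\sigma$-null set, and your $k$ simply does not control convergence to the original $\varphi$ there. Concretely, take $c=0$, $\lambda=\delta_{x_0}$, and $\varphi=1_{\{y_0\}}$ with $\sigma(\{y_0\})=0$: you may choose $\psi_n\equiv 0$, so $g=\infty\cdot 1_{\{y_0\}}$, $k=P^c_\tau g\equiv 0$, $D_o=D$, and at $y_0$ condition $(\ast)$ fails since $\lim_{x\to y_0}P^c_\tau\varphi(x)=0\neq 1=\varphi(y_0)$. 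The repair is to enlarge $g$: let $N$ be the $\sigma$-null set where $\lim_n\psi_n\neq\varphi$ or the limit fails to exist, pick open sets $U_n\supset N$ with $\sigma(U_n)\leqslant 2^{-n}$ and add to $g$ the lower semicontinuous, $\sigma$-integrable function $g':=\sum_n 1_{U_n}$, which equals $+\infty$ on $N$. Using the consequence of (\ref{continuous}) that $\liminf_{D\ni x\to y}P^c_\tau h(x)\geqslant h(y)$ for positive l.s.c.\ $h$ (approximate $h$ from below by $b\mathcal{C}_+(\partial D)$), one gets $\lim_{D_o\ni x\to y}k(x)=\infty$ for every $y\in N$, so $(\ast)$ becomes vacuous at the bad points and only $(\ast\ast)$ --- which your estimate already gives --- must hold there. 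Note that the paper's monotone-class route avoids this issue structurally: for an increasing sequence $f_n\nearrow f$ the identification $\lim_n f_n(y)=f(y)$ holds at \emph{every} point, not merely $\sigma$-a.e.
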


\begin{rem} \label{remarca}
$(i)$ 
The set $D\setminus D_o = [k=\infty]$ 
in  Lemma \ref{Lema} is finely closed, $\lambda$--polar and $\lambda$-negligible.
    
$(ii)$  In Theorem 2.5 from \cite{BeTeodor} it was considered the case when   
    $F=\mathbb{R}^d$, $d\geq 1$,   $Y$ is the $d$-dimensional Brownian motion on $\mathbb{R}^d$,
    $c\equiv 0$, 
    $\varphi : \partial D \longrightarrow \mathbb{R}$ is a bounded below Borel measurable function, such that there exists $x\in D$ with $H_D\varphi(x)<\infty$, and  $\sigma:=\delta _{x}\circ H_{D}$. 
    Here, $k$ is a real-valued harmonic function on $D$ and $D_o=D$.  
\end{rem}

 Let  $\mathcal{B}_{1}$ be the set of all functions $f\in \mathcal{B}_+(E)$
 such that $f\leqslant 1$. Assume that 
 \begin{equation}
 	\underset{x\in E} {%
 		\sup }\underset{k\geqslant 1} {\ \sum }kb_{k}\left( x\right) <\infty .
 	\label{conditions}
 \end{equation}%

\noindent
{\bf Non-local branching processes.}
Recall that a right Markov process with state space $\widehat{E}$ is called 
\textit{branching process} provided that for any independent copies $X_1$ and $%
X_2$ of the given process on $\widehat{E},$ starting respectively
from two measures $\mu_1 $ and $\mu_2$  from $\widehat{E}$, $X_1+X_2$ and the
process starting from $\mu_1 +\mu_2$ are equal in distribution (for more details see Appendix below).

The following assertions hold.
\begin{enumerate}
	\item[(A)] For every $f \in \mathcal{B}_1$, the equation 
	\begin{equation}
		w_{t}(x) =T^c_{t}f(x) +\underset{0}{\overset{t}{%
				\int }}T^c_{s}(c\underset{k\geqslant 1}{\sum }b_{k}B_{k}w_{t-s}^{(k)})(x) ds,\text{ }t\geqslant 0,x\in E,   \label{Ht}
	\end{equation}%
	has a unique solution $(t,x) \longmapsto H_{t}f (x) $ jointly measurable in $(t,x)  $ such that $%
	H_{t}f \in \mathcal{B}_1$,  for every $t\geqslant 0$, and the family $( H_{t})
	_{t\geqslant 0}$ is a nonlinear semigroup of operators on $\mathcal{B}_{1}$ 
	(cf.  \cite{BeJEMS}, Proposition 3.2, and \cite{BeLu16}, Proposition 4.1; see also
	\cite{BeOp11}, Proposition 6.2).
	
	\item[(B)] There exists a sub-Markovian semigroup $(\widehat{%
		\mathbf{H}}_{t})_{t\geqslant 0}$ on $(\widehat{E},$ $\mathcal{B}(\widehat{E}))$, formed from branching kernels, 
	such that $\widehat{\mathbf{H}}_{t}\widehat{f }=\widehat{H_{t}f }
	$ for all $f \in \mathcal{B}_{1}$, $t\geqslant 0$, and if $u\in \mathcal{B}_{1}$ and $%
	\widehat{u}$ is an invariant function with respect to this semigroup, then $u
	$ belongs to the domain of the weak generator 
	$L-c$ and satisfies the following nonlinear equation 
	\begin{equation*}
		(L-c)u+c\underset{k\geqslant 1}{\sum }b_{k}B_{k}u^{(k) }=0  %\label{equation Bk}
	\end{equation*}%
	(cf. \cite{BeOp11}, Proposition 6.1).
 	
	\item[(C)] Let 
	 $\widehat{X}=(\widehat{X}_{t},\widehat{\mathbb{P}}%
	^{\mu},\mu\in \widehat{E})$ be the branching Markov process with state space $\widehat{E}$, which has $(\widehat{\mathbf{H}}%
	_{t})_{t\geqslant 0}$ as its transition function. 
	Consequently, if $f \in \mathcal{B}_1$ then the solution $(H_t f)_{t\geqslant 0}$ of the equation (\ref{Ht}) has the
	representation%
	\begin{equation}
		H_{t}f \left( x\right) =
  \widehat{\mathbf{H}}_{t}\widehat{f }%
		\left( \delta _{x}\right) =
  \widehat{\mathbb{E}}^{\delta _{x}}
  \widehat{f } (\widehat{X}_{t}),\text{ }x\in E,\text{ }t\geqslant 0,
		\label{probabilistic}
	\end{equation}%
cf. \cite{BeLu16}, Theorem 4.10; see also \cite{BeLuVr20}, Theorem 4.1.
\end{enumerate}

We close this section with the probabilistic meaning 
of the branching mechanism for the branching process $\widehat{X}$: if $k > 1$ and $x\in E$, then $b_k(x)$
is the probability that a particle destroyed at the point $x$ has $k$ descendants, 
while the probability $B_{k,x}$ induced by the Markovian kernel $B_k$ is
the distribution of the $k$ descendants; recall that
$B_{k,x}$ is the probability measure on $E^{(k)}$
such that $\int_{E^{(k)}} f d B_{k,x}
= B_{k}f(x)$ for every $g\in \mathcal{B}_+(E^{(k)})$. 
Consequently, such a branching process is named {\it non-local}, 
since  the descendants are not born  from the point where the parent died.
Only in the  particular case related to the problem (\ref{prob1.2}) the branching process 
has the property that the descendants always 
start from the point where the parent died, because in this case we have 
$B_{k,x}=\delta_x$ for all $x\in E$.

\section{Dirichlet problem of non-local branching processes} \label{sect3}

\subsection{The case of the Laplace operator} \label{subsect3.1}

Let $F=\mathbb{R}^d$, $d\geqslant 1$, $D\subset \mathbb{R}^d$ 
a bounded regular domain, and $E=\overline{D}$. 
Let $Y=(Y_{t},\mathbb{P}^{x}, x\in \mathbb{R}^d)$ 
be the $d$-dimensional Brownian motion on $\mathbb{R}^d$, $\tau$  the first entry time of $\partial D$ of the Brownian motion.

If $ g: \partial D \longrightarrow \overline{\mathbb{R}}$ 
is a  bounded below Borel measurable function, by Theorem 3.7 from \cite{Port-Stone}   
it follows that $H_D g$ is a (real-valued) harmonic function on $D$ provided that $H_D g$ is not equal $+\infty$ everywhere on $D$.

Let $X=(X_t)_{t\geqslant 0}$ 
be the stopped Brownian process at the boundary of $D$. Recall that $X$ is a conservative path continuous Markov process. 
Let $L=\Delta$, i.e., $L$ is the 
weak generator of the d-dimensional Brownian motion stopped at the boundary of $D$.

If $ f: E \longrightarrow \mathbb{R}$ 
we denote by $\widehat{f}: \widehat{E} \longrightarrow \mathbb{R}$ the function defined as 
$\widehat{f} |_{E^{(k)}}=f^{(k)}$ if $k \geqslant 1$ and $\widehat{f} (\mathbf{0})=1$.

We can state now our first main result.

\begin{thm} \label{thm1.1}  % Theorem 2.3
	\label{main} Let $\varphi : \partial D \longrightarrow \mathbb{R}_{+}$ 
	be a bounded Borel measurable function, extended as a function on $E$ with the value zero on $D$.
 Let $r>0$ be such that $\|\varphi\|_{\infty}\leqslant r$. Assume that 
	\begin{equation}
\sup_{x\in E}	
 \underset{k\geqslant 1}{\sum }%
		r^{k-1}b_{k}\left( x\right) \leqslant 1
  \mbox{ and } \, 
			\sup_{x\in E} \sum_{k\geqslant 1} kr^{k-1}b_{k}\left( x\right) <\infty .
		\label{conditions}
	\end{equation}%
	Then there exists a constant $c_{0}>0$ such that if $c<c_{0}$, 
	then there exist a Borel measurable function  $g: \partial D \longrightarrow \overline{\mathbb{R}}_{+}$ and a non-local
	branching process $\widehat{X}=(\widehat{X}_{t}, \widehat{\mathbb{P}}^{\mu}, \mu\in \widehat{E})$
	with state space $\widehat{E}$ (the set of all finite
	configurations of $E$), and spatial motion  $X$, such that there exists
	\begin{equation}
		\underset{t\rightarrow \infty }{\lim }r\widehat{\mathbb{E}}^{\delta _{x}}\{%
		\widehat{( \frac{\varphi }{r}) }(\widehat{X}_{t})\} 
  =:u\left(
		x\right) \text{ for all }x\in E,  \label{existalimita}
	\end{equation}%
	and $u$ is a generalized solution to the nonlinear 
	Dirichlet problem $(\ref{problem2})$ with boundary data $\varphi $, associated with the operator $%
	u\longmapsto \left( \Delta -c\right) u+c\underset{k\geq 1}{\sum }%
	b_{k}B_{k}u^{\left( k\right) }$, that is 
	\begin{equation} 
		\label{Delta}
		\left\{ 
		\begin{array}{l}
			\left( \Delta -c\right) u+c\underset{k\geq 1}{\sum }b_{k}B_{k}u^{\left(
				k\right) }=0\text{ in }D \\ 
				u\overset{k}{\longrightarrow } \varphi ,%
		\end{array}%
		\right.
	\end{equation}%
	where $k$ is a %$\Delta$- %
 harmonic function on $D$,  defined as  $k=H_D g$.
\end{thm}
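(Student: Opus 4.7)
My plan is to rescale by $r$ so that items (A)--(C) apply directly to $f:=\varphi/r\in\mathcal{B}_1$, to construct $u$ as the limit of the rescaled branching semigroup and identify the PDE via invariance, and finally to obtain the controlled convergence from Lemma \ref{Lema} (in the absorbed-Brownian-motion case of Remark \ref{remarca}(ii)) together with a vanishing-at-boundary estimate for the nonlinear potential appearing in $u$.

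First, I will set $f:=\varphi/r$ (extended by $0$ on $D$), so $f\in\mathcal{B}_1$, and $\tilde b_k:=r^{k-1}b_k$, for which $(\ref{conditions})$ rewrites as $\sum_k\tilde b_k\leqslant 1$ and $M':=\sup_x\sum_k k\tilde b_k(x)<\infty$. Items (A) and (C) applied to $(\tilde b_k,B_k)$ then deliver a nonlinear semigroup $(H_t)_{t\geqslant 0}$ on $\mathcal{B}_1$ and a non-local branching process $\widehat X$ with spatial motion $X$ such that $H_tf(x)=\widehat{\mathbb E}^{\delta_x}\widehat f(\widehat X_t)$. Next I introduce $\Psi(v):=c\sum_k\tilde b_kB_kv^{(k)}$ and $T(v):=P^c_\tau f+V\Psi(v)$, with $V$ the potential kernel of $(T^c_t)$; the elementary bound $|v_1^{(k)}-v_2^{(k)}|\leqslant k\|v_1-v_2\|_\infty$ on $\mathcal{B}_1$ yields $\|\Psi(v_1)-\Psi(v_2)\|_\infty\leqslant cM'\|v_1-v_2\|_\infty$, while $\|V1\|_\infty\leqslant M_V:=\sup_x\mathbb E^x[\tau]<\infty$ because $D$ is bounded. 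Choosing $c_0:=(M'M_V)^{-1}$ will make $T$ a strict contraction for $c<c_0$, with unique fixed point $v^*$; the a priori bound $H_tf\in\mathcal{B}_1$ from (A) forces $v^*\in\mathcal{B}_1$. Passing to the limit $t\to\infty$ in $(\ref{Ht})$, using $(\ref{eq2.1})$ for $T^c_tf\to P^c_\tau f$ and dominated convergence for the integral term (subsequential ambiguity being excluded by uniqueness of the fixed point), gives $H_tf\to v^*$ pointwise, so $(\ref{existalimita})$ holds with $u:=rv^*$. To obtain the PDE I pass the semigroup identity $H_{t+s}f=\widehat{\mathbf H}_t\widehat{H_sf}(\delta_x)$ to the limit $s\to\infty$, showing $\widehat{v^*}$ is invariant for $(\widehat{\mathbf H}_t)$; by (B) this places $v^*$ in $\mathcal{D}(L-c)$ with $(L-c)v^*+c\sum_k\tilde b_kB_k(v^*)^{(k)}=0$, and multiplying by $r$ using $B_ku^{(k)}=r^kB_k(v^*)^{(k)}$ turns this into the first line of $(\ref{Delta})$.

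For the boundary behaviour I fix $x_0\in D$ and apply Lemma \ref{Lema} in the $c\equiv 0$ setting of Remark \ref{remarca}(ii) with $\lambda=\delta_{x_0}$; the prerequisite $(\ref{continuous})$ for absorbed Brownian motion on the regular bounded domain $D$ is guaranteed by Proposition \ref{prop2.2}. This produces a Borel $g:\partial D\to\overline{\mathbb R}_+$ and the real-valued harmonic function $k:=H_Dg$ on $D$ with $H_D\varphi\overset{k}{\longrightarrow}\varphi$. Multiplying the fixed-point identity by $r$ rearranges to $u=P^c_\tau\varphi+V\bigl(c\sum_k b_kB_ku^{(k)}\bigr)$, and I then estimate, for every $y\in\partial D$ and $x\in D$,
$$\bigl|P^c_\tau\varphi(x)-H_D\varphi(x)\bigr|\leqslant\|\varphi\|_\infty\|c\|_\infty\,\mathbb E^x[\tau],\qquad V\!\Bigl(c\sum_k b_kB_ku^{(k)}\Bigr)(x)\leqslant r\|c\|_\infty\,\mathbb E^x[\tau],$$
using $1-e^{-a}\leqslant a$, the domination $u^{(k)}\leqslant r^k$, and $(\ref{conditions})$. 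Since $\mathbb E^x[\tau]\to 0$ as $x\to y$ by regularity of $\partial D$, both right-hand sides vanish, so $u-H_D\varphi\to 0$ pointwise on $\partial D$; combined with $H_D\varphi\overset{k}{\longrightarrow}\varphi$ this upgrades to $u\overset{k}{\longrightarrow}\varphi$, establishing the second line of $(\ref{Delta})$.

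The main obstacle will be this last step: Lemma \ref{Lema} literally delivers $k=P^c_\tau g$, which is not harmonic when $c\neq 0$, whereas the statement demands the harmonic $k=H_Dg$. My way around this is to invoke the $c\equiv 0$ instance of the lemma and then transfer controlled convergence from $H_D\varphi$ to $u$ through the vanishing of $u-H_D\varphi$ at the regular boundary; this relies both on regularity of $\partial D$ (to force $\mathbb E^x[\tau]\to 0$) and on hypothesis $(\ref{conditions})$ being sharp enough to dominate the nonlinear potential by a constant multiple of $V1$.
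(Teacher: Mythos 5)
Your overall architecture is sound and in two places genuinely different from the paper's. For the existence of the limit in \eqref{existalimita}, the paper does not use a contraction: it verifies Nagasawa's condition (6) (namely $\mathbb{P}^x(\tau<R)\geqslant\varepsilon$ for all $x\in E$, which is where $c_0$ comes from) and then invokes Nagasawa's Proposition 2 to get convergence of $\widehat{\mathbb{E}}^{\delta_x}\widehat{\varphi}(\widehat{X}_t)$. For the boundary behaviour, the paper applies Lemma \ref{Lema} directly to the killed kernel $P^c_\tau$ with $\lambda$ the Lebesgue measure, obtains the control $P^c_\tau g$ on $[P^c_\tau g<\infty]$, and then upgrades it to the harmonic $k=H_Dg$ (and to $D_o=D$) via the identity $P^c_\tau g+G(cP^c_\tau g)=H_Dg$ from Chung--Zhao; your route through the $c\equiv 0$ case of Remark \ref{remarca}$(ii)$ plus the estimate $|u-H_D\varphi|\leqslant (1+r)\|c\|_\infty\mathbb{E}^x[\tau]\to 0$ at regular boundary points is a legitimate and arguably cleaner alternative, and your transfer of controlled convergence across a perturbation vanishing pointwise at $\partial D$ is exactly the device the paper uses for its term $v$.

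The genuine gap is the assertion that $H_tf\to v^*$ pointwise. Uniqueness of the fixed point of $T$ tells you only that \emph{if} $\lim_t H_tf$ exists and the limit can be passed inside the integral in \eqref{Ht}, then the limit is $v^*$; it does not produce convergence. ``Dominated convergence for the integral term'' presupposes that $H_{t-s}f$ converges for fixed $s$, which is the statement being proved, and ``subsequential ambiguity excluded by uniqueness'' fails because bounded pointwise families have no sequential compactness in a topology for which subsequential limits would still solve the fixed-point equation (and different $x$ could select different subsequences). This is repairable within your framework: setting $a(t):=\|H_tf-v^*\|_\infty$, splitting $\int_0^t=\int_0^{t/2}+\int_{t/2}^t$ and using $\sup_x\mathbb{P}^x(\tau>t)\leqslant M_V/t\to 0$ together with $\|Vc\|_\infty\leqslant\|c\|_\infty M_V$ yields $\limsup_t a(t)\leqslant M'\|c\|_\infty M_V\limsup_t a(t)$, forcing $\limsup_t a(t)=0$ when $c<c_0$ --- but you must supply this (or cite Nagasawa as the paper does). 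A second, smaller error: $\|V1\|_\infty\leqslant\sup_x\mathbb{E}^x[\tau]$ is false --- the stopped process is conservative and $c$ vanishes on $\partial D$, so $V1\equiv+\infty$. What your contraction actually needs, and what is true, is $\|Vc\|_\infty\leqslant\|c\|_\infty\sup_x\mathbb{E}^x[\tau]$, which suffices because $\Psi(v_1)-\Psi(v_2)$ always carries the factor $c$; state the bound in that form.
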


\begin{rem} \label{rem2.4} % Remark 3.2
	(i)
	The expression given by $(\ref{existalimita})$ 
 represents the 
	\textbf{stochastic solution to the nonlinear Dirichlet problem} $(\ref%
	{problem})$  with bounded  boundary data $\varphi $.
	
	(ii)
	It was proved in \cite{LuSt17} that if in addition the boundary data $\varphi $ is
	continuous on $\partial D$ then the problem  (\ref{Delta}) 
	can be solved such that the limit of the solution $u$ to $\varphi$ holds pointwise.
	
	(iii)
	The case of the linear Dirichlet problem with discontinuous, bounded boundary value problem was treated  in \cite{Be11} and \cite{BeTeodor}, where
	it was proven that the stochastic solution (induced by the Brownian motion on $D$)
	solves the linear Dirichlet problem in the sense of the controlled convergence.
	In \cite{Lukes10} it is given a different approach, 
	by using the Perron-Wiener-Brelot method instead of the stochastic solution. 
 A nonlinear Dirichlet problem with discontinuous boundary data was solved in \cite{BeTeodor}.

 (iv) Recall that a subset $M$ of $\partial D$ is called {\rm negligible} provided that $H_D (1_M)\equiv 0$ on $D$. 
 According to Theorem 6.5.2 from \cite{ArGar01} and Proposition 2.7 from \cite{Cornea98} 
 it follows that the set on the boundary where the solution $u$ 
 does not converge pointwise to the given boundary data $\varphi$ is negligible. 
 Notice that the property of the control function $k$ to be superharmonic on $D$ is used here in an essential way.

 (v) For each $k\geqslant  1$ define the kernel $B_{k}$ as $B_{k}g\left( x\right)
	:=g\left( x,...,x\right) $ for all $x\in E$ and $g\in b\mathcal{B%
	}_+(E^{\left( k\right) }).$ 
	Then $B_{k}u^{\left( k\right) }=u^{k}$
	for all $u\in \mathcal{B}_+(E)$, $u\leqslant 1$, 
	and  $B_{k}$ is a Markovian kernel for all $k\geqslant 1.$ 
	Equation (\ref{Ht})
	becomes 
 \begin{equation*}
		w_{t}\left( x\right) =T_{t}\varphi \left( x\right) +\underset{0}{\overset{t}{%
				\int }}T_{s}(c\underset{k\geqslant 1}{\sum }b_{k}w_{t-s}^{k})\left( x\right) ds,%
		\text{ }t\geqslant 0,\text{ }x\in E,
	\end{equation*}
 while the problem (\ref{problem2}) reduces to 
 \begin{equation} %\label{problem2} 
	\left\{ 
	\begin{array}{l}
		( L -c) u+c\underset{k\geq 1}{\sum }b_{k}u^{
			k }=0\text{ in }D \\ 
		u\overset{k}{\longrightarrow } \varphi. %
	\end{array}%
	\right.    
\end{equation}%
We already mentioned in the Introduction
that with these kernels the problem
(\ref{problem}) is transformed in 
(\ref{prob1.2}).
\end{rem}

\begin{proof}[Proof of Theorem \ref{thm1.1}]
\noindent
Assume that $r=1$.
From conditions (%
\ref{conditions}) %satisfied for $r=1$,
and (A), the 
equation (\ref{Ht}) has a unique solution $( t,x) \longmapsto $ $%
H_{t}f (x)$, $t\geqslant 0.$ Applying (B) and (C), there exist a sub-Markovian branching semigroup $(\widehat{\mathbf{H}}%
_{t})_{t\geqslant 0}$ such that $\widehat{\mathbf{H}}_{t}\widehat{f }=%
\widehat{H_{t}f },$ and a branching Markov process $\widehat{X}=(\widehat{X_{t}}, \widehat{\mathbb{P}}^\mu, \mu \in \widehat{E})$ on $\widehat{E}$ which has $(\widehat{\mathbf{H}}%
_{t})_{t\geqslant 0}$ as transition function. Hence $\widehat{H_{t}f }%
( \mu ) =\widehat{\mathbb{E}}^{\mu }\widehat{f}(\widehat{X%
}_{t})$ for all $\mu \in \widehat{E},$ $t\geqslant 0,$ and $f
\in \mathcal{B}_+(E),$ $f \leqslant 1,$ and 
(\ref{probabilistic}) holds.

Arguing as in the proof of Theorem 2.1 from \cite{LuSt17}, we have that condition (6) from Nagasawa's
paper \cite{Naga76} is verified, more precisely, 
there exists a constant $c_{0}$ such that if $c<c_{0}$, then for some $%
\varepsilon >0$%
\begin{equation*}
\mathbb{P}^x(\tau <R)\geqslant \varepsilon \text{ for all }x\in E,
\end{equation*}%
where $R$ is the killing time of  $X$  by the 
multiplicative functional induced by $c$.
It follows from  Proposition 2 in \cite%
{Naga76} that for every $x\in E$ and any function $\varphi \in \mathcal{B}_+(\partial D)$ with $\varphi \leqslant 1$,  
$u( x ) :=\lim_{t\to \infty }\widehat{%
\mathbb{E}}^{\delta _{x}}\widehat{\varphi }(\widehat{X}_{t})$ exists, even if  $\varphi$ is not necessarily continuous; 
recall that the boundary data $\varphi$ is considered here as a function on $E$
vanishing on $D$.

We show that $u$ is a generalized solution to the nonlinear Dirichlet
problem (\ref{Delta}) with boundary data $\varphi .$
As in the proof of Proposition 6.4 from \cite{BeOp11}, the
function $\widehat{u}$ is invariant with respect to $(\widehat{\mathbf{H}}%
_{t})_{t\geqslant 0.}$ So, from (B), it follows that $u$ belongs to the domain of $%
\Delta $ and $\left( \Delta -c\right) u+c\underset{k\geqslant 1}{\sum }%
b_{k}B_{k}u^{\left( k\right) }=0$ holds on $D.$

Now we prove that $u$ converges to $\varphi$ in the sense of the controlled convergence. 
Letting $t\rightarrow \infty $ in $H_{t}\varphi =T^c_{t}\varphi
+\int_{0}^{t}T^c_{s}(c\underset{k\geqslant 1}{\sum }b_{k}B_{k}\left( H_{t-s}\varphi
\right) ^{\left( k\right) })ds$ we get that $u=P_{\tau }^{c}\varphi +v,$
where $v:=\underset{t\rightarrow \infty }{\lim }\int_{0}^{t}T^c_{s}(c\underset{%
k\geqslant 1}{\sum }b_{k}B_{k}\left( H_{t-s}\varphi \right) ^{\left( k\right)
})ds.$ As in the proof of Therorem 2.1 from \cite{LuSt17} 
(see also the proof of Proposition 6.4 from \cite{BeOp11}), we have that $\underset{D\ni
x\rightarrow y}{\lim }v\left( x\right) =0$ for all $y\in \partial D.$

Since $\varphi $ is not necessarily
continuous on $\partial D$, we cannot apply Proposition \ref{prop2.2}, 
however, we can apply Lemma \ref{Lema}  with $\lambda$ the Lebesgue measure on $D$ 
and we get that there exists $g \in \mathcal{B}_+(\partial D)$ such that 
$P_{\tau }^{c}\varphi $  converges to $\varphi $ controlled by 
$P_{\tau}^cg\in L^1(D, \lambda)$ on $D_o :=[P_{\tau}^cg < \infty]$. 
We claim that $D_o=D$.
Indeed,  by Theorem 4.7 (i) from \cite{ChungZhao95}, applied for the Kato class function $q=-c$, we
have
$$
P^c_\tau g + G(cP^c_\tau g) =
H_D g \ \mbox{ on D}
\mbox{ for every } g\in b\mathcal{B}_+(\partial D),
$$
where $G$ is the Green operator for the domain $D$.
The above equality extends to every Borel measurable positive function $g$ on $\partial D$ and 
by Lemma 2.11 from \cite{ChungZhao95} we have 
$G(L^1(D, \lambda))\subset L^1(D, \lambda)$. 
It follows that $H_D g$ also belongs to
$L^1 (D, \lambda)$. 
We argue now as in \cite{BeTeodor}.
Because  $H_D g$ is not equal $+\infty$ everywhere on $D$, by
Theorem 3.7 from \cite{Port-Stone},  page 106, it follows that 
$H_D g$ is a (real-valued) harmonic
function on $D$, hence $D_o=D$ as claimed,  because 
$P^c_{\tau} g \leqslant H_D g$. 
It follows that $k:= H_D g$ is also a real-valued control function and  $u\overset{k}{\longrightarrow }\varphi$. 

Suppose now that $r>1.$ 
Arguing as in \cite{BeOp14}, let $\psi :=\frac{\varphi 
}{r},$ 
then clearly $\left\vert \left\vert \psi \right\vert \right\vert _{\infty
}\leqslant 1$. 
Let 
\begin{equation*}
b_{k,r}:=r^{k-1}b_{k},\text{ }k\geqslant 1.
\end{equation*}%
Conditions (\ref{conditions}) become $\underset{k\geqslant 1}{\sum }b_{k,r}\left(
x\right) \leqslant 1$ and $\underset{x\in E}{\sup }\ \underset{%
k\geqslant 1}{\sum }kb_{k,r} \left( x\right) <\infty $. 
Therefore, applying (A), (B) and (C) for the sequences $\left(
b_{k,r}\right) _{k\geqslant 1}$ and $\left( B_{k}\right) _{k\geqslant 1}$, and the
measurable and subunitary boundary data $\psi$, there exist a semigroup $%
\left( H_{t}^{r}\right) _{t\geqslant 0}$ of nonlinear operators on $\mathcal{B}%
_{1}$, a {sub-Markovian} semigroup $(\widehat{\mathbf{H}^{r}_{t}})_{t\geqslant 0}$
of branching kernels on $(\widehat{E},$ $\mathcal{B(}\widehat{%
E})),$ and a Markov branching process $\widehat{X^{r}}=(\widehat{%
X^{r}_{t}},\widehat{\mathbb{P}^{r} }^{\mu},\mu\in \widehat{E})$ which
has $(\widehat{\mathbf{H}^{r}_{t}})_{t\geqslant 0}$ as transition function,
such that $H_{t}^{r}\psi \left( x\right) =\widehat{\mathbf{H}^{r}_{t}}%
\widehat{\psi }\left( \delta _{x}\right) =\widehat{\mathbb{E}^{r}}^{\delta
_{x}} 
\widehat{\psi }(\widehat{X^{r}_{t}})$ for all $x\in E$ and 
$t\geqslant 0.$

With the above considerations, applying the arguments from before to $%
\left( b_{k,r}\right) _{k\geqslant 1}$, $\left( B_{k}\right) _{k\geqslant 1}$ and $%
\psi $, we have that there exist a constant $c_{0}>0$ and a function $g\in \mathcal{B}_+%
( \partial D ) $ such that if $c<c_{0}$ 
\begin{equation*}
v(x):=\underset{t\to \infty }{\lim }\widehat{\mathbb{E}^{r}}^{\delta _{x}}%
\widehat{\psi }(\widehat{X^{r}_{t}}) \text{ for all }x\in 
E
\end{equation*}%
 is the generalized solution of the nonlinear Dirichlet problem with
boundary data $\psi$, 
associated with the operator $v\longmapsto $ $%
\left(\Delta -c\right) v+c\underset{k\geqslant 1}{\sum }b_{k,r}B_{k}v^{\left(
k\right) }$, that is%
\begin{equation*}
\left\{ 
\begin{array}{l}
\left( \Delta -c\right) v+c\underset{k\geqslant 1}{\sum }b_{k,r}B_{k}v^{\left(
k\right) }=0\text{ in }D \\ 
	v\overset{k}{\longrightarrow } \psi ,%
\end{array}%
\right.
\end{equation*}%
where $k:=H_{D}g$ is a harmonic function.
Moreover, taking $u:=r v,$ we have 
$$
u\left( x\right) =\underset{t\rightarrow \infty }{\lim }rH_{t}^{r}\psi
\left( x\right) =\underset{t\rightarrow \infty }{\lim }r\widehat{\mathbf{H}%
^{r}_{t}}\widehat{\psi }\left( \delta _{x}\right)
=\underset{t\rightarrow \infty }{\lim }r\widehat{\mathbb{E}^{r}}^{\delta
_{x}}\widehat{\psi }(\widehat{X_{t}^{r}}),\text{ }x\in E,\text{ }t\geqslant 0.
$$
Since $\left( \Delta -c\right) v+c\underset{k\geqslant 1}{\sum }%
b_{k,r}B_{k}v^{\left( k\right) }=0$ on $D,$ we obtain that $%
\left( \Delta -c\right) u+c\underset{k\geqslant 1}{\sum }b_{k}B_{k}u^{\left(
k\right) }=0$ on $D$ and because $v\overset{k}{\longrightarrow }\psi $, 
we get that $u\overset{k}{\longrightarrow }\varphi $, completing the proof.
\end{proof}

\subsection{The case of the gradient type operator} \label{subsect3.2}

Let $F$ be a Lusin topological space, $D$ a bounded domain of $F$, 
$E=\overline{D}$, and $\phi =(\phi _{t})_{t\geqslant 0}$ be a \textit{right continuous flow} on $F$.
 \noindent
  Recall that a right continuous flow on $F$ is a family $\phi=(\phi_t)_{t\geqslant 0}$ 
  of mappings on $F$  (cf. \cite{Sharpe}, page 41; see also \cite{BeIoLu}) provided that:
 
 (1) $\phi_{t+s}(x)=\phi_t(\phi_s(x))$ for all $s,t>0$ and $x\in F$;
 
 (2) $\phi_0(x)=x$ for all $x\in F$;
 
 (3) For each $t>0$ the function $F\ni x\longmapsto \phi_t(x)$ is $\mathcal{B}(F)/\mathcal{B}(F)$-measurable;
 
 (4) For each $x\in F$ the function $t\longmapsto \phi_t(x)$ is right continuous on $[0,\infty)$.

 \vspace{2mm}

 The right continuous flow $\phi $ may be regarded as a deterministic right Markov process on $F$
 with infinite lifetime, $Y=(\Omega ,\mathcal{F},\mathcal{F}_{t},{Y}_{t},\theta
 _{t}, {\mathbb{P}}^{x})$: $\Omega =F,$ $\mathcal{F}=\mathcal{F}_{t}=\mathcal{B}(\mathbb{R}^d),$ ${Y}_{t}(x):=\phi _{t}(x)=:\theta
 _{t}(x)$ for all $x\in \Omega ,$ and $ {\mathbb{P}}^{x}=\delta _{x}$. 
 Let $(S_t)_{t\geqslant 0}$ be the transition function of $Y$ (that is, of $\phi$),  
 $S_tf(x)=f(\phi_t(x))$ for all $t\geqslant 0$, $x\in F$, and $f\in \mathcal{B}_+(F)$. In particular,  the transition function 
 $(S_t)_{t\geqslant 0}$ on $F$ is Markovian. 
 
 Let $\Lambda$ be the weak generator of $Y$. 
 We say also that $\Lambda$ is the {\it weak generator of $\phi$}.
 It  is a first order  "gradient type operator"  
 in the sense that the domain $\mathcal{D}(\Lambda)$ of $\Lambda$ 
 is an algebra and if $u\in \mathcal{D}(\Lambda)$ then $\Lambda(u^2)=2u\Lambda u$; cf. \cite{BeBezzCi24}.
\vspace{1mm}

\begin{exam} \label{exam3.3}
 $(i)$ The classical example is the Euclidean gradient flow, 
 which was used e.g. in \cite{BeIoLu} to describe a multiple-fragmentation process
 to model the time evolution of a system of particles which move on an Euclidean surface,  driven by a given force and split in smaller fragments. 
 More precisely, we fix a smooth  Euclidean vector field $\mathbf{B}\in bC^ 1(\mathbb{R}^m, \mathbb{R}^m)$ 
 and consider  the continuous flow on $\mathbb{R}^m$ having the (weak) generator
 $\Lambda u = \mathbf{B}\cdot \nabla u$, $u\in bC^ 1(\mathbb{R}^m)$. 
 
 $(ii)$ However, it is possible to have relevant examples of  continuous flows 
 and gradient type operators in  an infinite dimensional frame. 
 An example is the measure-valued branching continuous flow studied in \cite{BaBe16}.
 Namely, let $(T_t)_{t\geqslant 0}$ be a transition function on a Lusin topological space $F$. 
 We assume that $(T_t)_{t\geqslant 0}$  is a
 {\it Feller transition function}, that is,  $T_tf$ is a continuous function on $F$ for every $f\in bC(F)$ and $t>0$.
 Actually, in \cite{BaBe16} it was considered the particular case of the transition function 
 of the reflecting Brownian motion on the closure of a bounded Euclidean domain.
 Let $M(F)$ denote the convex cone of all positive finite measures on $F$.
 Endowed with the weak topology $M(F)$ is also a Lusin topological space.
 For each $t\geqslant 0$ and $\mu \in  M(F)$ define  $\phi^o_t(\mu):= \mu\circ T_t$.
 One can check that the family  $\phi^o =(\phi ^o_{t})_{t\geqslant 0}$ is a continuous flow on $M(F)$.
 One can see also  that the transition function of $\phi^o$ is formed from branching kernels on $M(E)$.
 Furthermore, one can compute the weak generator of $\phi^o$ 
 (see assertion (ii) of Example 4.1 from \cite{BaBe16}), it might be considered  a substitute, 
 on the infinite dimensional space $M(E)$, for the gradient type operator 
 $\mathbf{B} \cdot \nabla$ from the above assertion $(i)$.
 \end{exam}
 
 \vspace{1mm}
 
  Let $\tau:F\rightarrow [0,+\infty]$ the first entry time of $\partial D$ by $\phi$, that is,
 $\tau(x)=\inf \{t\geqslant 0:\phi_t(x)\in \partial D\}$. Assume that $\tau$ is bounded, 
 thus there exists $M\in \mathbb{R}$ such that $0\leqslant \tau(x)\leqslant M$ for every $x\in F$. 
 Suppose that  
 \begin{equation}\label{taucont}
 \lim_{D \ni x \rightarrow y }\tau(x)=0 \text{ for every } y\in \partial D. 
\end{equation}
 We consider $\Phi$, the flow $\phi$ stopped  at the boundary of $D$, that is, 
 $\Phi_t=\phi_{t\wedge \tau }$ for $t\geqslant 0$. 
 It follows (cf. \cite{BeBezzCi24} and \cite{BeIoLu})
 that $\Phi$ is  a right continuous flow on $E$. 
 Clearly, its deterministic right Markov process $X$ on $E$ is precisely the
 process $Y$ stopped at the boundary of $D$, $X_t=Y_{t\wedge \tau}$, $t\geqslant 0$.
Let $(T_t)_{t\geqslant 0}$ be the transition function of $X$ (i.e., of $\Phi$),  
$T_tf(x)=f(\Phi_t(x))$ for all $t\geqslant 0$, $x\in E$,  and $f\in \mathcal{B}_+(E)$. 

Denote  by $L$ the weak generator of $\Phi$. 
One can see that $L$ coincides on $D$ with the restriction to $D$ of $\Lambda$, 
the weak generator of $\phi$, in the following sense. 
 If $f\in \mathcal{D}(\Lambda)$
then
$f|_E \in \mathcal{D}(L)$
and $L( f|_E)= \Lambda f$ on $D$ and $Lg=0$ on $\partial D$ for all $g\in \mathcal{D}(L)$.

 For every  Borel measurable function $ f: \partial D \longrightarrow \overline{\mathbb{R}}$ we define  
 $$
 {H}_D f(x) :={ \mathbb{E}}^{x} f (\Phi_{\tau})=f(\phi_{\tau(x)}(x))\mbox{ for all  } 
 x\in {D}.
 $$
 
 Let $(b_{k})_{k\geqslant 1}$ be a sequence of positive numbers such that  $%
 \underset{k\geqslant 1}{\sum }b_{k}\leqslant 1$ and assume that $1<m_{1}:=\underset{k\geqslant 1}{\sum }kb_{k}
 <\infty$. We also fix a function $c:F \rightarrow \mathbb{R}$ such that $c(x)=c_1$ for $x\in D$, where $c_1$ 
 is a constant and $0<c_1 \leqslant \frac{m_{1}}{m_{1}-1}$, and $c(x)=0$ for $x\in F \setminus D$.  

 Let $(T^c_{t})_{t\geqslant 0}$ be the transition
  function of $X$
  killed with the multiplicative functional induced by $c$, i.e.,  for $x\in E$%
 \begin{equation*}
 	T^c_{t}f(x):={\mathbb{E}}^{x}
  \{ e^{-\int_{0}^{t}c(\Phi_s)ds}f(\Phi _{t})\} =e^{-\int_{0}^{t}c(\Phi_s(x))ds}f(\Phi _{t}(x))
 	,\text{ }t\geqslant 0,\text{ 
 	}f\in b\mathcal{B}_+(E). 
 \end{equation*}
 Because $c$ is $0$ on $F \setminus D$,  we have that for $x\in E$, $t\geqslant 0$, and $f\in \mathcal{B}_+(F)$  
 	$$
 	\lim_{t\rightarrow \infty}T^c_{t}(f|_E)(x)=\lim_{t\rightarrow \infty}e^{-\int_{0}^{t}c(\Phi_s(x))ds}f|_E(\Phi _{t}(x))=e^{-c_1\tau(x)}f(\phi _{\tau(x)}(x))=:P^c_{\tau}f(x). 
 	$$
 	
 	 From (\ref{taucont}) and the properties of $\phi$ it follows that for every $f\in b\mathcal{C}_+(\partial D)$ we have 
 	\begin{equation} 
 		\lim_{D\ni x\rightarrow y}P^c_{\tau}f(x)
   =\lim_{D\ni x\rightarrow y}H_D f(x)
   =f(\phi_0(y))=f(y)\text{ for every }y\in\partial D. \label{cont}
 	\end{equation}

  Notice that the validity of 
  (\ref{cont}) means that condition (\ref{taucont}) implies that 
  the stochastic solution to the Dirichlet problem for the weak generator $L$ of $\phi$ 
  is the classical solution, provided that the boundary data is continuous.

    Let $X^{0}=(X_{t}^{0}, \mathbb{P} ^x, x\in E)$ be the trivial Markov process on $E$ for
 which every point is a trap, that is, for each $x\in E$, $\mathbb{P}%
 ^{x}(X_{t}^{0}=x)=1$ for all $t\geqslant 0$, or equivalently, each kernel from
 its transition function $(T_{t}^{0})_{t\geqslant 0}$ is the identity operator, 
 $T_{t}^{0}f=f$ for every $t\geqslant 0$ and $f\in \mathcal{B}_+(E)$. 
 We consider the non-local branching process $\widehat{X^0}$ on $\widehat{E}$ 
 for which the spatial motion is $X^0$ (see \cite{BeVr22}); because actually 
 $\widehat{X^0}$ has no spatial motion, it is called {\it pure branching process}.

    The main assumption is the following "commutation property" of the flow 
    $\Phi$ and the branching mechanism induced by the sequence 
$(B_k)_{k\geqslant 1}$ :
    for all $f\in \mathcal{B}_+(E)$, $f\leqslant 1$,  we have
\begin{equation}
\label{condComp}
    B_k(f \circ \Phi_t)^{(k)}=B_kf^{(k)}\circ \Phi_t,\text{ }k\geqslant 1, \text{ }t\geqslant 0; 
\end{equation}
cf. condition (4.5) from \cite{BeVr22}. 
By assertion $(i)$ of Remark 4.3 from \cite{BeVr22} it follows that
condition (\ref{condComp}) is satisfied by the sequence of kernels $(B_k)_{k\geqslant 1}$ from the above 
Remark \ref{rem2.4} $(v)$.

The following result corresponds to Theorem \ref{thm1.1} in the frame of this subsection. 

\begin{thm} \label{thm3.3}
Let $\varphi \in b\mathcal{B}_+(\partial D)$, $r>0$  be such that 
$\left\vert \left\vert \varphi \right\vert \right\vert _{\infty }<r$, and assume that 
	$$
	\underset{k\geq 1}{\sum }%
	r^{k-1}b_{k} \leqslant 1\text{ and }\underset{k\geq 1}{\sum }kr^{k-1}b_{k}<\infty .
	$$
 We also fix a finite measure $\lambda$ on $D$. 
	 Suppose  that $\tau$ is bounded on $D$, satisfies (\ref{taucont}), and the mapping $%
	[0,\infty )\times E\ni (t,x)\longmapsto \Phi _{t}(x)$ is jointly continuous. 
	Then there
	exist a Borel measurable function 
$g:\partial D\longrightarrow  \overline{\mathbb{R}}_{+}$, 
a $\lambda$-polar $\lambda$-negligible set $M_o\subset D$, and a non-local branching process 
	$\widehat{X}=(\widehat{X}_{t},\widehat{\mathbb{P}}^{\mu}, \mu \in \widehat{E})$ 
	with state space $\widehat{E}$ and spatial motion  $\Phi$,  such that there
	exists 
	\begin{equation}
		\lim_{t\rightarrow \infty }r\widehat{\mathbb{E}}^{\delta _{x}}
  \{ \widehat{\left( \frac{%
				\varphi }{r}\right) }(\widehat{X}_{t})
    \}=\lim_{t\rightarrow \infty }r\widehat{%
			\mathbb{E}^{0}}^{\delta _{x}}
   \{\widehat{\left( \frac{\varphi }{r}\right) }%
		(\Phi _{t}(\widehat{X^{0}_{t}}))\} =:u(x)\text{ for all }x\in E.  
		\label{solutieFlow}
	\end{equation}%
	The function $u$ is a generalized solution to the nonlinear Dirichlet problem (\ref{problem2}) 
	with boundary data $\varphi$ and $L$, the weak generator of $\Phi$, that is%
	\begin{equation} \label{ProbFlow}
		\left\{ 
		\begin{array}{l}
			(L-c)u+c\sum_{k\geqslant 1}b_{k}B_{k}u^{(k)}=0\text{ in }D \\[3mm]
				u\overset{k}{\longrightarrow } \varphi \text{ on }D \setminus M_o,%
		\end{array}%
		\right.		
	\end{equation}%
	where $k=H_Dg$ is $\lambda$-integrable 
	and the exceptional $\lambda$-polar  $\lambda$-negligible  set is 
 $M_o=[k= +\infty]$. 
  
\end{thm}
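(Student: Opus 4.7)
The plan is to adapt the scheme of Theorem \ref{thm1.1} to the flow setting, with two genuinely new ingredients: the representation through the pure branching process $\widehat{X^0}$ combined with the flow $\Phi$ (following \cite{BeVr22}), and a version of controlled convergence that has to allow for an exceptional $\lambda$-polar set. First, by the same rescaling trick as in the Laplace case, set $\psi:=\varphi/r$ and $b_{k,r}:=r^{k-1}b_k$; then $\|\psi\|_\infty<1$, $\sum_{k\geqslant 1}b_{k,r}\leqslant 1$ and $\sup_x\sum_{k\geqslant 1}kb_{k,r}(x)<\infty$, so (A), (B), (C) supply a nonlinear semigroup $(H^r_t)_{t\geqslant 0}$ on $\mathcal{B}_1$, a sub-Markovian branching semigroup $(\widehat{\mathbf{H}^r_t})_{t\geqslant 0}$, and the branching process $\widehat{X^r}=(\widehat{X^r_t},\widehat{\mathbb{P}^r}^{\,\mu})$ with spatial motion $\Phi$, such that $H^r_t\psi(x)=\widehat{\mathbb{E}^r}^{\delta_x}\widehat{\psi}(\widehat{X^r_t})$ for all $x\in E$, $t\geqslant 0$.

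Next I would establish the existence of the limit in \eqref{solutieFlow}. Since $\tau$ is bounded by $M$ on $E$ and $c=c_1>0$ on $D$, the killing time $R$ of $X$ by the multiplicative functional associated with $c$ satisfies $\mathbb{P}^x(\tau<R)\geqslant e^{-c_1 M}=:\varepsilon>0$ for every $x\in E$, verifying Nagasawa's condition (6) from \cite{Naga76}. Hence by Proposition 2 of \cite{Naga76} the limit $v(x):=\lim_{t\to\infty}\widehat{\mathbb{E}^r}^{\delta_x}\widehat{\psi}(\widehat{X^r_t})$ exists for every $x\in E$, and I would argue, exactly as in the proof of Theorem 2.1 from \cite{LuSt17} and Proposition 6.4 from \cite{BeOp11}, that $\widehat{v}$ is invariant under $(\widehat{\mathbf{H}^r_t})_{t\geqslant 0}$, so by (B) the function $v$ lies in $\mathcal{D}(L-c)$ and satisfies $(L-c)v+c\sum_{k\geqslant 1}b_{k,r}B_kv^{(k)}=0$ on $D$. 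Passing to $u:=rv$ converts this into the PDE of \eqref{ProbFlow} for the original coefficients $b_k$ and boundary data $\varphi$.

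To get the boundary behaviour, I would let $t\to\infty$ in the equation $H^r_t\psi=T^c_t\psi+\int_0^t T^c_s\bigl(c\sum_kb_{k,r}B_k(H^r_{t-s}\psi)^{(k)}\bigr)ds$, obtaining the decomposition $v=P^c_\tau\psi+w$ where $w(x):=\lim_{t\to\infty}\int_0^t T^c_s(\cdots)ds$ satisfies $\lim_{D\ni x\to y}w(x)=0$ for every $y\in\partial D$, by the same estimate as in \cite{LuSt17}, \cite{BeOp11} combined with \eqref{cont}. Since $\psi$ is only bounded Borel, I invoke Lemma \ref{Lema} with the given finite measure $\lambda$ and $\sigma:=\lambda\circ P^c_\tau$ (noting that \eqref{cont} is the needed analogue of \eqref{continuous} for the flow): there exists $g\in\mathcal{B}_+(\partial D)$ such that $P^c_\tau\psi\xrightarrow{k_o}\psi$ on $D_o:=[k_o<\infty]$ with $k_o:=P^c_\tau g\in L^1(D,\lambda)$. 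Setting $k:=H_Dg$ one has $P^c_\tau g(x)=e^{-c_1\tau(x)}g(\phi_{\tau(x)}(x))\leqslant H_Dg(x)=g(\phi_{\tau(x)}(x))$ on $D$, so $[k<\infty]\subseteq D_o$; taking $M_o:=[k=+\infty]$ and combining the boundary behaviour of $w$ with the controlled convergence of $P^c_\tau\psi$, I obtain $v\xrightarrow{k}\psi$ on $D\setminus M_o$, hence $u\xrightarrow{k}\varphi$ on $D\setminus M_o$. The $\lambda$-integrability of $k$ is $k\in L^1(D,\lambda)$ which follows from $k_o\in L^1(D,\lambda)$ together with the inequality above (strictly, from the corresponding extension through the Green-type identity as in the Brownian case); then $M_o$ is $\lambda$-negligible, and it is $\lambda$-polar because it is the set where the $L$-excessive function $k$ is infinite on the fine closure, cf. assertion $(i)$ of Remark \ref{remarca}.

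Finally, for the equivalent representation via $\widehat{X^0}$ and $\Phi$ in \eqref{solutieFlow}, I would invoke the commutation property \eqref{condComp} and Theorem 4.2 of \cite{BeVr22}, which asserts that under this hypothesis the branching process $\widehat{X}$ with spatial motion $\Phi$ may be realized as $\widehat{X_t}=\Phi_t(\widehat{X^0_t})$ in law, so that $\widehat{\mathbb{E}}^{\delta_x}\widehat{\psi}(\widehat{X_t})=\widehat{\mathbb{E}^0}^{\delta_x}\widehat{\psi}(\Phi_t(\widehat{X^0_t}))$; joint continuity of $(t,x)\mapsto\Phi_t(x)$ ensures that the required measurability and passage to the limit are legitimate. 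The main obstacle I anticipate is the verification that the exceptional set $M_o$ is simultaneously $\lambda$-polar and $\lambda$-negligible while still being compatible with the controlled convergence of $v=P^c_\tau\psi+w$ (not just of $P^c_\tau\psi$): the argument hinges on the fact that $w$ converges pointwise to zero at every boundary point independently of $k$, so the control by $k$ carries over to $v$ on $D\setminus M_o$ without enlarging the exceptional set.
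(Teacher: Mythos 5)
Your proposal is correct and follows essentially the same route as the paper's proof: rescaling to $\psi=\varphi/r$ and $b_{k,r}=r^{k-1}b_k$, verifying Nagasawa's condition (6) by the explicit computation $\mathbb{P}^x(\tau<R)=e^{-c_1\tau(x)}\geqslant e^{-c_1M}>0$ (the paper likewise notes that Nagasawa's own strong Feller argument is unavailable for a flow), the decomposition $v=P^c_\tau\psi+w$ with $w\leqslant c_1\tau\to 0$ at the boundary via (\ref{taucont}), Lemma \ref{Lema} for the control $k_o=P^c_\tau g$, and Theorem 4.2 of \cite{BeVr22} for the representation through $\Phi_t(\widehat{X^0_t})$. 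One minor correction: the $\lambda$-integrability of $k=H_Dg$ does not follow from the inequality $k_o\leqslant k$ that you quote, but from the reverse bound $k=e^{c_1\tau}k_o\leqslant e^{c_1M}k_o$, which is immediate from the boundedness of $\tau$ --- no Green-type identity is needed here, in contrast to the Brownian case.
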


\begin{proof}
We follow the steps of the proof of Theorem \ref{thm1.1}.
	We treat first the case  $r=1$. 
Let $( H_t f)_{t\geqslant 0}$
be the unique solution of the equation (\ref{Ht}) and
$(\widehat{\mathbf{H}}_t)_{t\geqslant 0}$ be the induced branching semigroup of kernels on $\widehat{E}$ such that 
 $\widehat{\mathbf{H}}_t \widehat{f}=\widehat{H_t f}$ for all $f \in b\mathcal{B}_+(E)$, $ f \leqslant 1$. 
 By (C) there exists a branching Markov process $\widehat{X}$ with state space $\widehat{E}$ 
 having the transition function $(\widehat{\mathbf{H}}_t)_{t\geqslant 0}$.
 
 Employing Proposition 2 from \cite{Naga76} we show that for every $x\in E$ the limit 
	$u(x):=$ $\lim_{t\rightarrow \infty}H_{t}\varphi(x) $ exists, 
	using also some arguments from the proof of Theorem 2.1 in \cite{LuSt17}. 
 In order to do so, we verify  condition (6) from \cite{Naga76}, namely that for some $\varepsilon >0$ we have 
	$\mathbb{P}^x (\tau < \zeta)\geqslant \varepsilon \text{ for all }x\in E$, 
	where $\zeta$ is the killing time of the process $\Phi_t$ with the multiplicative functional induced by $c$. 
 Notice that the argument from  \cite{Naga76} 
 to obtain the above inequality is inadequate here because  the transition function of a flow is not strong Feller.
 However, we can argue as follows: we have that 
 ${\mathbb{P}}^x (\tau < \zeta) ={\mathbb{E}}^x\{e^{-\int_{0}^{\tau} c(\Phi_s)ds}\} = {\mathbb{E}}^x\{ e^{-\tau c_1}\} =
	e^{-\tau(x) c_1}\geqslant e^{-M c_1}=:\varepsilon >0 $, where
 for the proof of the first equality we send to  \cite{MeTh}, page 143, and (62.22) from \cite{Sharpe}.

  As in the proof of Theorem \ref{thm1.1} we have that $u$ belongs to the domain of $L$ 
  and  it satisfies the equation $(L-c)u+c\sum_{k\geqslant 1}b_{k}B_{k}u^{(k)}=0$  in $D$, and
  $%
	 u(x)=P^c_{\tau}\varphi+w,$ where 
	 $w:=\lim_{t\rightarrow \infty }$
  $\int_{0}^{t}T^c_s(c\sum_{k\geqslant 1}b_{k}B_{k}(
	 H_{t-s}\varphi) ^{(k)}) ds.$ 
  It follows that 
$w(x)\leqslant \int_{0}^{\infty }T^c_{s}c(x)ds\leqslant \mathbb{E}^{x }\int_{0}^{\tau }c(\phi
_{s})ds=\int_{0}^{\tau (x)}c_{1}ds=c_{1}\tau (x)$. By (\ref{taucont}) we have
  $\lim_{D\ni x\rightarrow y}w(x)=0$
	 for all $y\in \partial D$.

	  It follows from (\ref{cont}) and Lemma \ref{Lema} that there exists $g \in \mathcal{B}_+(\partial D)$ such that  $P_{\tau }^{c}\varphi\overset{k_o}{\longrightarrow }\varphi$ on $[k_o<\infty]$ and $k_o:= P_{\tau}^cg$.
   Therefore $u\overset{k}{\longrightarrow }\varphi$ on $[k<\infty]$, where $k=H_D g= e^{c_1\tau} P^ c_\tau g\geqslant k_o$.
   According with Theorem 4.2 from \cite{BeVr22}, 
   we have that the branching process $\widehat{X}$ has the representation 
   $\widehat{X}_t =\Phi_t(\widehat{X^0_t})$, $t\geqslant 0$, where  the equality is in the distribution sense.

   The case  $r>1$ is treated similarly to the second part of the proof of Theorem \ref{thm1.1}, 
   the solution $u$ is given by (\ref{solutieFlow}).  
\end{proof}

\noindent
{\bf Examples of continuous flows leaving a bounded open set and satisfying (\ref{taucont}) }  

(1) Let $F=[0,\infty)$, $D=[0,1)$, and $\psi=(\psi_t)_{t\geqslant 0}$ be the uniform motion to the right, 
$\psi_t(y)=y+t$, $y \in F$, $t\geqslant 0$. If $\tau$ is the entry time of $\partial D=\{1\}$, 
then clearly $\tau(x)=1-x$ for $x \in F$, $\tau\leqslant diam(D)=1$, and $\tau$ satisfies (\ref{taucont}).

(2)
Let $\phi^o=(\phi^o_t)_{t\geqslant 0}$ be a continuous flow on $F^{d-1}$, $d\geqslant 2$, 
and consider the flow $\phi=(\phi_t)_{t\geqslant 0}$ on $F^{d}$ given by the Cartesian product of $\phi^o$ and $\psi$, 
$$
\phi_t (x,y)=(\phi^o_t(x),\psi_t(y)), (x,y)\in F^d=F^{d-1}\times F, t\geqslant 0.
$$ 
Then for every bounded open set $D\in F^d$ the entry time $\tau$ of $\partial D$ is bounded. 
Indeed, let $m:=diam(D_1)$, where $D_1$ is the projection of $D$ on $F$. 
Let $\tau^1:F\longrightarrow \mathbb{R}_+\cup \{+\infty\}$ be the first entry time of $\partial D_1$ by $\psi$. 
Then $\tau^1(y)\leqslant m$ for every $y\in D_1$ and clearly $\tau((x,y))\leqslant \tau^1(y)\leqslant m$ for every $(x,y)\in D$.

(3) 
Assume that $d=2$, 
$D:= [0,1)^2$, and that $\phi^o=\psi$.
It follows from (2) that
$\tau^2$, the entry time of the boundary of $D$ by $\phi$, is bounded and in addition
$\tau^2((x,y)) \leqslant \inf(\tau(x), \tau(y))$ for any $(x,y)\in D$.
By (1) we conclude that 
$\tau^2$ also satisfies (\ref{taucont}).

\vspace{2mm}

\noindent{\bf Acknowledgements.} 
This work was supported by a grant of the Ministry of Research, Innovation and
Digitization, CNCS - UEFISCDI, project number PN-III-P4-PCE-2021-0921, within PNCDI III.

\section{Appendix}
\noindent
{\bf Fine topology.} Let $F$ be a Lusin topological space and $\mathcal{B}(F)$ its Borel $\sigma$-algebra. Let $\mathcal{U}=(U_{\alpha})_{\alpha >0}$ be the sub-Markovian resolvent of kernels on $(F, \mathcal{B}(F))$ 
of the right Markov process 
$Y=(Y_t, \Omega,  \mathcal{F}, \mathcal{F}_t, X_t, \theta_t, \mathbb{P}^x, \xi)$ 
with transition function $(T_t)_{t\geq 0}$, that is, $U_{\alpha}f:= \int^{\infty}_0 e^{-\alpha t}T_tf dt$, $f\in \mathcal{B}_+(F)$. 

A function $u\in \mathcal{B}_+(F)$ is called 
{\it $\mathcal{U}$-excessive} provided that $\alpha U_{\alpha}\leq u$ for all $\alpha >0$,
and $\lim_{\alpha \rightarrow \infty} \alpha U_{\alpha}u(x)=u(x)$, $x\in F$. 
If $\beta >0$, we denote by $\mathcal{U}_{\beta}$ 
the sub-Markovian resolvent of kernels defined as $\mathcal{U}_{\beta}=(U_{\beta + \alpha })_{\alpha>0}$. 
The \textit{fine topology} is the coarsest topology on $F$ making continuous all 
$\mathcal{U}_{\beta}$-excessive functions for some (and equivalently for all) $\beta >0$. 
Recall that a function $f\in \mathcal{B}_+(F)$  is finely continuous if and only if 
$t\longmapsto f(Y_t)$ is a.s. right continuous on $[0, \xi)$.
Consequently, the given Lusin topology on $F$ is smaller than the fine topology.

\vspace{1mm}

\noindent
{\bf Weak generator.} 
Following \cite{Fi89}, page 354,
and \cite{BeBezzCi24}, 
define 
$$
\mathcal{B}_o:=\{ f \in b\mathcal{B}(E): f \textit{ is finely continuous  on } E \textit{ with respect to } Y \}
$$
and
$$
\begin{aligned}
\mathcal{D}(L):=\{ f \in \mathcal{B}_o : \left ( \frac{\mathbb{E}^x f(Y_t)-f(x)}{t} \right )_{t,x} \textit{ is bounded for } x \in E \textit{ and } t \textit{ in a neighbourhood of 0} \\ \textit{ and there exists } 
\lim_{t \rightarrow 0} \frac{\mathbb{E}^xf(Y_t)-f(x)}{t} \textit{ pointwise (in $x$) and the above limit is a function from } \mathcal{B}_o  \}.
\end{aligned}
$$
For $f \in \mathcal{D}(L)$ and $ x\in E$ let
$$
Lf(x):=\lim_{t \rightarrow 0} \frac{\mathbb{E}^x f(Y_t)-f(x)}{t}.
$$

We have that $L:\mathcal{L}(D) \longrightarrow b\mathcal{B}(E)$ is a linear operator, 
$\mathcal{D}(L)=U_\alpha(\mathcal{B}_o)$, $\alpha>0$, and if $f=U_\alpha g$, $g \in \mathcal{B}_o$, then $(\alpha -L)f=g$. 
The pair $(L, \mathcal{D}(L))$ is called the \textit{weak generator} of the process $X$.

\bigskip

\noindent
{\bf Non-local branching processes.} 
Let $p_{1}$ and $p_{2}$ two finite measures on $\widehat{E}.$ Recall that
their convolution $p_{1}\ast p_{2}$ is the finite measure on $\widehat{E}$
defined for every $F\in b\mathcal{B}_+(\widehat{E})$ by 
\begin{equation*}
	\int_{\widehat{E}}p_{1}\ast p_{2}\left( d\nu \right) F\left( \nu \right)
	:=\int_{\widehat{E}}p_{1}\left( d\nu_{1}\right) \int_{\widehat{E}}p_{2}\left(
	d\nu _{2}\right) F\left( \nu _{1}+\nu _{2}\right) .
\end{equation*}

Recall that (see e.g. \cite{Si68}) 
a kernel $N$ on $(\widehat{E},\mathcal{B}(\widehat{%
	E}))$ which is sub-Markovian (i.e. $N1\leqslant 1)$ is called {\it branching kernel} 
provided that for all $\mu ,\nu \in \widehat{E}$ we have $N_{\mu +\nu
}=N_{\mu }\ast N_{u},$ where $N_{\mu }$ denotes the measure on $\widehat{E}$
such that $\int_{\widehat{E}} gdN_{\mu }=Ng\left( \mu \right) $ for all $g\in b\mathcal{B}_+%
(\widehat{E}).$

A Markov process with state space $\widehat{E}$ is a branching process if and only if its transition function
is formed from branching kernels (see e.g. \cite{Li22} and \cite{BeLu16}; see also 
\cite{BeLuVr20} and \cite{BeDeLu15}).

\vspace{1mm}

\begin{proof}[\bf{Proof of the equality $(\ref{eq2.1})$}] 
Let $\omega \in \Omega$ be  such that $\tau(\omega)<\infty$.
If $t>\tau(\omega)$
then 
$\int_0^t c(X_s(\omega)) ds= \int_0^{\tau(\omega)}c(X_s(\omega)) ds+
\int_{\tau(\omega)}^t c(X_s(\omega))ds=
\int_0^{\tau(\omega)}c(Y_s(\omega)) ds$,
where the last equality holds because 
$X_s= Y_\tau\in \partial D$ on $[s\geqslant \tau]$ and $c$ vanishes on $\partial D$.
Since we assumed that 
for every $x\in D$ we have 
$\mathbb{P}^x$-a.s. that 
$\tau<\infty$, 
it follows that
$\mathbb{P}^x$-a.s. there exists
$\lim_{t\to \infty} 
e^{-\int_0^t c(X_s) ds} f(X_t)=
e^{-\int_0^\tau c(Y_s) ds} f(Y_\tau)$.
By dominate convergence we conclude now that $(\ref{eq2.1})$ holds.
\end{proof}

\vspace{1mm}

\begin{proof}[\bf{Proof of Lemma \ref{lemma2.1}}] 
Let $f\in b\cb_+(F)$ and $k_t := S_t^c f$, $t\geqslant 0$.
By Proposition 3.3 from \cite{BeJEMS} it follows that $(k_t)_{t\geqslant 0}$ is the unique solution of the integral equation
$$
k_t= S_tf -
\int_0^t S_s(c k_{t-s})  ds, \, t\geqslant 0.
$$
By hypothesis the functions
$S_tf$ and $S_s(c k_{t-s})$, $0<s< t$,
are continuous on $F$ and the dominate convergence implies  that the function $\int_0^t S_s(c k_{t-s})  ds$ is also continuous. 
Hence $k_t$ is a continuous function for every $t>0$, completing the proof of assertion $(i)$.
If in addition  
$S_tf$ vanishes at infinity on  the  locally compact space  $F$,   
then clearly the continuous function $S^c_tf$ 
also vanishes at infinity because $S^c_tf\leqslant S_tf$.
\end{proof}

\vspace{1mm}

\begin{proof}[\bf{Proof of Proposition \ref{prop2.2}}] 
We consider the process obtained from $Y$ by killing with the multiplicative functional induced by $c$. 
Its transition function is $(S_t^c)_{t\geqslant 0}$ and by Lemma \ref{lemma2.1} it is strong Feller and 
acts on continuous functions vanishing at infinity. 
On the other hand by Theorem 6.12 from \cite{BlGe68}
(Hunt's fundamental theorem) it follows that
$P_\tau^c$ coincides on $D$ with the hitting kernel of $\partial D$ with respect to the killed process.
Since we assumed that the potential kernel $V$ is proper, 
we can apply now Theorem 13.1 from \cite{Dynkin} 
with respect to the above mentioned killed process and we deduce that $(\ref{continuous})$ holds.
\end{proof}

\vspace{1mm}

\begin{proof}[\bf{Sketch of the proof of Lemma \ref{Lema}}] 
Recall that $P_\tau^c$ coincides on $D$ with the hitting kernel of $\partial D$ with respect to the killed process 
(see e.g. the proof of Proposition \ref{prop2.2}). 
Let %
\begin{equation*}
	\begin{array}{cc}
		\mathcal{M} & :=\mathcal{\{}f\in L_{+}^{1}( \partial D,\sigma ) :%
		\text{ there exists }g\in \mathcal{B}_+( \partial D) \text{ such
			that }P_{\tau }^{c}f\overset{k}{\rightarrow }f\text{ on } \\ 
		& \text{ }\left[ k\,<\infty \right] \text{ where }k:=P_{\tau}^cg\in
		L_{+}^{1}\left( D,\lambda \right) \}.%
	\end{array}%
\end{equation*}%
If $f\in \mathcal{C}_{+}( \partial D)$, taking $k=0$, 
we have from (\ref{continuous}) that $\mathcal{C}_{+}\left( \partial D\right)
\subset \mathcal{M}.$ It is sufficient to prove that 
\begin{equation}
	\text{if }\left( f_{n}\right) _{n}\subset \mathcal{M}\text{, }f_{n}\nearrow
	f\in L_{+}^{1}\left( \partial D,\sigma \right) \text{ then }f\in \mathcal{M}%
	\text{.}  \label{bogdanovici}
\end{equation}%
Indeed, we can argue as in the proof of Theorem 5.3 from \cite{BeCoRo11}.
Notice that the control function $k$ is hyperharmonic on the
domain $D$ and thus the set $M:=\left[ k=\infty \right] $ is $\lambda$-polar.

To prove (\ref{bogdanovici}) one can use the technique 
in the proof of Theorem 4.8 from \cite{Be11} (see Theorem 5.3 from \cite{BeCoRo11}).	
\end{proof}

\end{document}